\theoremstyle{plain}
\newtheorem{theorem}{Theorem}
\newtheorem{lemma}{Lemma}
\newtheorem{corollary}{Corollary}
\begin{document}

\title{Complex non-backtracking matrix for directed graphs}

\author[1]{Keishi Sando%
	\thanks{Corresponding author: \texttt{sando.keishi.sp@alumni.tsukuba.ac.jp}}}
\affil[1]{Department of Statistical Science, The Graduate University for Advanced Studies, Kanagawa, JP}

\author[2]{Hideitsu Hino}
\affil[2]{The Institute of Statistical Mathematics, 10-3 Midoricho, Tokyo, JP}

\date{}

\maketitle

\begin{abstract}
	{Graph representation matrices are essential tools in graph data analysis. Recently, Hermitian adjacency matrices have been proposed to investigate directed graph structures. Previous studies have demonstrated that these matrices can extract valuable information for clustering. In this paper, we propose the complex non-backtracking matrix that integrates the properties of the Hermitian adjacency matrix and the non-backtracking matrix. The proposed matrix has similar properties with the non-backtracking matrix of undirected graphs. We reveal relationships between the complex non-backtracking matrix and the Hermitian adjacency matrix. Also, we provide intriguing insights that this matrix representation holds cluster information, particularly for sparse directed graphs.
	} 
	{directed graphs; Hermitian adjacency matrix; non-backtracking matrix; spectral clustering.} 
	\\
\end{abstract}

\section{Introduction}

In network analysis, various matrix representations have been developed to investigate the structural properties of the corresponding network,
among which the non-backtracking (NBT) matrix is one such representation.
The NBT matrix is well-known for its relationship with the Ihara zeta function~\citep{Ihara1966-bk} defined as an infinite product over equivalence classes of primitive cycles.
It was shown in~\citep{Ihara1966-bk,Sunada1986-kg} that, for regular graphs,
the reciprocal of the Ihara zeta function can be expressed as a polynomial related to the adjacency matrix.
The relation between the zeta function and the determinant of the NBT matrix was elucidated in~\citep{Hashimoto1989-su},
extended to irregular graphs in~\citep{Bass1992-ek}, and an elementary proof was provided in~\citep{Stark1996-mx}.
The connection between the polynomial and the determinant of the NBT matrix, via the Ihara zeta function, is known as the Ihara's formula.
Not limited to undirected graphs, variants of Ihara's formula have been studied in various contexts such as directed~\citep{Tarfulea2009-sy}, weighted~\citep{Kempton2016-ev,Konno2019-jd} settings.
The NBT matrix and matrices derived from the NBT matrix can reveal graph structures through their spectra~\citep{Jost2023-pj,Mulas2024-en}
and are also utilized for graph clustering problem~\citep{Krzakala2013-up,Newman2013-zc,Saade2014-vg,Bordenave2018-qo,Gulikers2017-iu}, node centrality computation~\citep{Martin2014-tv,Grindrod2018-va,Torres2021-nr,Timar2021-es} and graph isomorphism problem~\citep{Mulas2024-en}.
Methods developed for these problems have also found applications in other fields, such as the analysis of epidemic dynamics~\citep{Shrestha2015-ih, Radicchi2016-rm}.

To investigate graph structures, research on the spectra of undirected graphs has been conducted for a long time.
Several characteristics of undirected graphs, including the number of connected components, bipartite structure, and diameter,
have well-established relationships with eigenvalues of matrices~\citep{Chung1996-hp}.
In contrast, the study of eigenvalues in directed graphs remains relatively underexplored.
One of the reasons is that the adjacency matrix of directed graphs, being asymmetric, typically yields complex eigenvalues, complicating the analysis.
To address this challenge while preserving the intrinsic information of directed graphs, the Hermitian adjacency matrices have been introduced in~\citep{Guo2017-or,Liu2015-qu}.
Basic properties of the Hermitian adjacency matrices are discussed in~\citep{Guo2017-or},
and Hermitian energies, defined as the absolute sum of eigenvalues, were studied in~\citep{Liu2015-qu}.

Hermitian adjacency matrices have been used in clustering problems, especially for spectral clustering.
For undirected graphs, the traditional objective is to identify clusters
that are densely connected internally and sparsely connected between clusters.
Although these methodologies can be applied to directed graphs by symmetrization~\citep{Satuluri2011-ja},
these may fail to capture valuable structures derived from directed information.
To address this, spectral clustering methods for directed graphs based on a Hermitian adjacency matrix have recently been proposed.
In~\citep{Cucuringu2020-de}, their proposed algorithm was evaluated using the directed stochastic block model.
The directional relationships between clusters were formulated as a graph cut problem in \citep{Laenen2020-ft}.
In~\citep{Hayashi2022-sy}, they derived an efficient algorithm by utilizing the property that the employed Hermitian matrix is both Hermitian and skew-symmetric, and evaluated the flow imbalance in directed graphs.
In contrast to the previously mentioned Hermitian adjacency matrices, where complex values are determined by the edge connections between vertex pairs, a variant of the Hermitian adjacency matrix that incorporates the cluster structure into its definition is proposed in ~\citep{Martin2024-en}, along with an iterative algorithm designed for cluster estimation using this variant.

In certain conditions involving undirected graphs, it has been observed that methods derived from the adjacency matrix yields unreliable outcomes.
For instance, in node clustering on sparse graphs, these methods often result in misidentified clusters.
Moreover, in node centrality, the presence of hub vertices causes the importance to concentrate on a few vertices, making it difficult to distinguish between the remaining vertices.
It has been reported that the NBT walks and the NBT matrix can mitigate these issues~\citep{Krzakala2013-up,Newman2013-zc,Martin2014-tv,Arrigo2018-vi},
whereas few studies have aimed at elucidating its properties for both undirected and directed graphs,
and its relationship with the Hermitian adjacency matrix has not yet been established.

The issue arising from methods derived from the adjacency matrix under specific conditions, as previously mentioned, is likely to also occur with the Hermitian adjacency matrix for directed graphs as well.
We demonstrate this in Section~\ref{section:application} through a numerical experiment.
In this work, as a step toward developing a method that can be robustly applied across various situations,
we introduce a complex non-backtracking~(CNBT) matrix for directed graphs that incorporates characteristics of both the Hermitian adjacency matrix and the NBT matrix.
Our matrix representation exhibits favorable properties, analogous to the relationship between the adjacency matrix and the NBT matrix in undirected graphs.
Additionally, we show through experiments that in the node clustering problem, spectral clustering based on our matrix performs effectively in scenarios where methods based on the Hermitian adjacency matrix lead to misclassification.

The rest of this paper is organized as follows.
In Section~\ref{section:preliminaries}, we fix notations and give basic properties on NBT walks.
Section~\ref{section:relation_with_herm_adj_mat} is dedicated to the main results, which corroborate the validity of the proposed non-backtracking matrix.
Additionally, in Section~\ref{section:application}, we demonstrate that our clustering algorithm outperforms an existing method when the underlying graph is sparse.
This result further supports the usefulness of our definition.
In Section~\ref{section:conclusion}, we give a conclusion on this paper.

\section{Preliminaries}\label{section:preliminaries}

\subsection{Notations}

Throughout this paper, let $G=(V,E)$ denotes a directed simple graph without self-loops.
We assume that the vertices of $G$ are indexed, and we use each vertex and its corresponding index interchangeably.
For an arbitrary directed edge $e \in E$, let $i_{e}, t_{e} \in V$ be the initial and terminal vertex of $e$, respectively.
In this paper, the symbol $i$ is used in the following ways: $i_{e}$ denotes the initial vertex of a directed edge $e$.
When used alone or as a subscript, $i$ serves as an index. The imaginary unit is represented as $\sqrt{-1}$.

When a directed edge $e$ is oriented from $u\in V$ to $v \in V$, we use $e=\overrightarrow{uv}$.
Its inverse, the directed edge from $v$ to $u$, is denoted by $e^{-1} = \overrightarrow{vu}$.
An undirected edge between $u$ and $v$ is represented as $uv~(=vu)$.
For any digraph $G$, we write $U_G = (V, E_U)$ to denote the underlying undirected graph of $G$, which is the undirected graph with the same vertex set of $G$, and whose edge set $E_U$ is defined by replacing each directed edge $\overrightarrow{uv} \in E$ with the undirected edge $uv \in E_U$. If both $\overrightarrow{uv}$ and $\overrightarrow{vu}$ are included in $E$, these two directed edges give rise to the same undirected edge $uv \in E_U$.
We can write $E_U = \left\{ uv \mid u,v \in V,\; \overrightarrow{uv} \in E \;\text{or}\; \overrightarrow{vu} \in E \right\}$.
An undirected edge $uv \in E_U$ is referred to as an unoriented edge.
Let $n\coloneqq \left| V \right|$ be the number of vertices in $G$, and $m\coloneqq \left| E_U \right|$ be the number of unoriented edges in $U_G$.

We introduce $\bar{E}$ and $\vec{E}$ to refer to directed edges with assigned indices.
Let $\bar{E} \coloneqq \left\{ \overrightarrow{uv}, \overrightarrow{vu} \mid uv \in E_U \right\}$ be the set of directed edges obtained by orienting each edge of $U_G$ in both directions.
We then define $\vec{E}$ as an indexed set of $\bar{E}$ under the constraint that $e_{m+i} = e_{i}^{-1},\; i=1,\dots,m$.
Note that $E \subset \bar{E}$ and $| \bar{E} | = | \vec{E} | = 2m$ hold.
Unless ambiguity arises, we will not distinguish between $e \in \vec{E}$ and its assigned index.

To indicate the presence of directed edges between two vertices in $G$,
we introduce the notation $u \rightarrow_{G} v$ to denote $\overrightarrow{uv} \in E \;\text{and}\; \overrightarrow{vu} \not\in E$~\citep{Liu2015-qu}.
We treat the notations $u \rightarrow_{G} v$ and $v \leftarrow_{G} u$ as equivalent and interchangeable.
Similarly, we write $u\leftrightarrow_{G} v$ to denote $\overrightarrow{uv}, \overrightarrow{vu} \in E$.
In appendix~\ref{appendix:notations}, we write down a concrete example.

For any vertex $u \in V$, we define four types of neighborhoods as follows:
\begin{align*}
	\overleftarrow{N_u}      & \coloneqq \left\{ v \in V \mid u \leftarrow_{G} v \right\}
	,                        & \quad
	\overrightarrow{N_u}     & \coloneqq \left\{ v \in V \mid u\rightarrow_{G} v \right\}
	,                                                                                                                \\
	\overleftrightarrow{N_u} & \coloneqq \left\{ v \in V \mid u\leftrightarrow_{G} v \right\}
	,                        & \quad
	N_u                      & \coloneqq \overleftrightarrow{N_u} \cup \overleftarrow{N_u} \cup \overrightarrow{N_u}
	.
\end{align*}
We use $\overleftarrow{d_u} \coloneqq | \overleftarrow{N_u} |,\; \overrightarrow{d_u} \coloneqq | \overrightarrow{N_u} |,\; \overleftrightarrow{d_u} \coloneqq | \overleftrightarrow{N_u} |,\; d_u \coloneqq \overleftarrow{d_u} + \overrightarrow{d_u} + \overleftrightarrow{d_u}$ and $D \coloneqq \text{diag}(d_1,\dots, d_n) \in \mathbb{R}^{n\times n}$ to represent the in-degree, out-degree, bi-degree and degree of $u$.

A sequence of vertices $W = (v_1,\dots, v_k)$ of a digraph $G$ is often called a walk if,
for any two consecutive vertices $v_i$ and $v_{i+1}$ in the sequence,
there exists a directed edge from $v_i$ to $v_{i+1}$ in $E$.
We define the length of a walk $W$ as the number of edges traversed in the walk, and we denote this length by $|W|$.
A sequence of vertices is called a mixed walk of $G$~\citep{Liu2015-qu} when there exists an unoriented edge between any two consecutive vertices in $E_U$.
Namely, a mixed walk is allowed to traverse edges against their orientation.

A mixed walk $W=(v_1, \dots, v_k)$ has backtracking if there exists $i\in \{ 2,\dots, k-1 \}$ such that $v_{i-1} = v_{i+1}$.
When a mixed walk does not have backtracking, it is called a NBT mixed walk.
A mixed walk is closed if its initial and terminal vertices coincide; such a mixed walk is called a cycle.
The $r$ times repetition of a cycle $C$ is denoted as $C^r$.
Even if a cycle $C$ is NBT, $C^2$ is not necessarily NBT.
A cycle $C$ is said to have a tail if $C$ is NBT but $C^2$ is not~\citep{Stark1996-mx}.
A cycle is primitive when it is not a repetition of another cycle.
Two cycles are considered equivalent if they follow the same sequence of vertices but start from different vertices.
Let $[C]$ be the equivalence class which includes a cycle $C$.

We propose to introduce the concept of ``rotation'' in a mixed walk.
Let $R \in \mathbb{N}$ be the maximum rotation number we can arbitrarily determine.
For $\overrightarrow{uv} \in \vec{E}$, we define its rotation $r(u,v)$ as follows:
\begin{align*}
	r(u,v) \coloneqq \begin{cases}
		                 1  & u \rightarrow_{G} v
		                 ,                            \\
		                 -1 & u\leftarrow_{G} v
		                 ,                            \\
		                 0  & u \leftrightarrow_{G} v
		                 .
	                 \end{cases}
\end{align*}
This value $r(u,v)$ indicates whether traversing from $u$ to $v$ aligns with the direction of the corresponding edge in $G$. Note that traversals between two vertices connected by bidirectional edges are not counted.
The rotation of a mixed walk $W=(v_1,\dots, v_k)$ is defined by $r(W) \coloneqq \sum_{l=1}^{k-1} r(v_l, v_{l+1}) \text{ mod } R$.
This quantity represents a remainder of the number of times directed edges of mixed walk $W$ are traversed in their correct orientation in $G$.

In this paper, let $\alpha \in \mathbb{C}$ be a complex number with an absolute value of $1$,
and we set the maximum rotation number $R$ as the smallest positive integer satisfying $\alpha^R = 1$.
Although several definitions of a Hermitian adjacency matrix have been proposed~\citep{Guo2017-or,Liu2015-qu,Mohar2020-fa,Martin2024-en},
we focus on the Hermitian adjacency matrix $A_{\alpha} \in \mathbb{C}^{n\times n}$ of a directed graph $G$ defined as follows~\citep{Guo2017-or,Liu2015-qu}:
\begin{align}
	\left( A_{\alpha} \right)_{uv} \coloneqq \begin{cases}
		                                         1            & u \leftrightarrow_{G} v
		                                         ,                                      \\
		                                         \alpha       & u\rightarrow_{G} v
		                                         ,                                      \\
		                                         \bar{\alpha} & u\leftarrow_{G} v
		                                         ,                                      \\
		                                         0            & \text{otherwise}
	                                         \end{cases}
	\quad u,v \in V
	,\label{definition:A_alpha}
\end{align}
where $\bar{\alpha}$ is a conjugate complex number of $\alpha$.
The matrix $A_{\alpha}$ represents the adjacency relationship between vertices.
We use $A \in \mathbb{R}^{n\times n}$ as the standard adjacency matrix.
Next, we also introduce a matrix representation that describes the adjacency relationship between directed edges.
Let $\delta_{uv}$ be the Kronecker delta function and
$B \in \mathbb{R}^{2m\times 2m}$ be the standard NBT matrix,
whose $(e,f)$ element is given by $(B)_{ef} \coloneqq \delta_{t_e i_f} (1 - \delta_{i_e t_f})$.
We define a directed edge type $\lambda_e \in \mathbb{C}$ of $e \in \vec{E}$, and propose the complex non-backtracking matrix $B_{\alpha} \in \mathbb{C}^{2m\times 2m}$ as follows:
\begin{align}
	B_{\alpha} \coloneqq B \Lambda
	,\quad \text{where}\quad
	\lambda_e  \coloneqq \begin{cases}
		                     1            & i_e \leftrightarrow_{G} t_e
		                     ,                                          \\
		                     \alpha       & i_e \rightarrow_{G} t_e
		                     ,                                          \\
		                     \bar{\alpha} & i_e \leftarrow_{G} t_e
		                     ,
	                     \end{cases}
	\quad
	\Lambda    \coloneqq \text{diag}\begin{pmatrix}
		                                \lambda_1 & \dots & \lambda_{2m}
	                                \end{pmatrix}
	.\label{definition:B_alpha}
\end{align}

Let $\overleftrightarrow{A}, \overrightarrow{A}, \overleftarrow{A} \in \mathbb{R}^{N\times N}$ be
the matrices where $( \overleftrightarrow{A} )_{uv} = 1$ if $u\leftrightarrow_{G} v$ and $0$ otherwise,
$( \overrightarrow{A} )_{uv} = 1$ if $u\rightarrow_{G} v$ and $0$ otherwise, and
$( \overleftarrow{A} )_{uv} = 1$ if $u\leftarrow_{G} v$ and $0$ otherwise, respectively.
Similarly, define $\overleftrightarrow{B}, \overrightarrow{B}, \overleftarrow{B} \in \mathbb{R}^{2m \times 2m}$ be the matrices where $( \overleftrightarrow{B} )_{ef} = (B)_{ef}$ if $i_f \leftrightarrow_{G} t_f$ and $0$ otherwise,
$( \overrightarrow{B} )_{ef} = (B)_{ef}$ if $i_f \rightarrow_{G} t_f$ and $0$ otherwise,
and $( \overleftarrow{B} )_{ef} = (B)_{ef}$ if $i_f \leftarrow_{G} t_f$ and $0$ otherwise, respectively.
These notations lead $A_{\alpha} = \overleftrightarrow{A} + \alpha \overrightarrow{A} + \bar{\alpha} \overleftarrow{A}$
and $B_{\alpha} = \overleftrightarrow{B} + \alpha \overrightarrow{B} + \bar{\alpha} \overleftarrow{B}$ hold.

The number of mixed walks plays a crucial role in proving the results of this paper.
Let us define matrices $P_{(k,r)} \in \mathbb{Z}^{n\times n},\; k\geq 0, r\in \{ 0,\dots,R-1 \}$ whose $(u,v)$ entry $( P_{(k,r)} )_{u,v}$ is the number of NBT mixed walks in $G$ of length $k$ and rotation $r$, starting at $u$ and ending at $v$.
We set $P_{(0,0)} = I$ and $P_{(0,r)} = \bm{0},\; r\neq 0$, where $I$ represents the identity matrix.
The definitions of $\overleftrightarrow{A}, \overrightarrow{A}$ and $\overleftarrow{A}$ lead that $P_{(1,0)} = \overleftrightarrow{A},\; P_{(1,1)} = \overrightarrow{A},\; P_{(1,R-1)} = \overleftarrow{A},\; P_{(1,r)} = \bm{0},\; r=2,\dots, R-2$ are satisfied.
As similar quantities, let $n_{(k,r)} \in \mathbb{Z}$ be the number of NBT cycles of length $k$ and rotation $r$ without a tail.
We also define matrices $Q_{(k,r)} \in \mathbb{Z}^{2m\times 2m},\; k\geq 0, r\in \{ 0,\dots,R-1 \}$ whose $(e,f)$ entry $( Q_{(k,r)} )_{ef}$ represents the number of NBT mixed walks of length $k$ and rotation $r$ that end with $f$, to which the directed edge $e$ can be prepended while maintaining the NBT property.
To be more specific, let $W$ be any mixed walk counted in $( Q_{(k,r)} )_{ef}$.
The definition of $(Q_{(k,r)} )_{ef}$ implies that the directed edge $e$ can be connected to the beginning of $W$,
and the resulting mixed walk, formed by concatenating $e$ to the beginning of $W$, remains NBT.
We set $Q_{(0,0)} = I$ and $Q_{(0,r)} = \bm{0},\; r\neq 0$.
It is evident that $Q_{(1,0)} = \overleftrightarrow{B}$, $Q_{(1,1)} = \overrightarrow{B}$,
$Q_{(1,R-1)} = \overleftarrow{B}$ and $Q_{(1,r)} = \bm{0},\; r=2,\dots, R-2$ from the definition of $\overleftrightarrow{B}, \overrightarrow{B}$ and $\overleftarrow{B}$.

Since the NBT matrix represents the relation between edges, its eigenvector is indexed by edges.
To extract vertex-related information from eigenvectors, as will be discussed in Section~\ref{section:application},
we need to convert a vector indexed by edges to a vector indexed by vertices.
Such an approach has been adopted in \citep{Krzakala2013-up,Newman2013-zc}, and here we introduce an extension of the approach.
Assume that $g \in \mathbb{C}^{2m}$ is a $2m$ dimensional vector, where each index corresponds to a directed edge in $\vec{E}$.
We introduce the in- and out-vector of $g$, denoted by $g_{\alpha}^{\text{in}},\; g_{\alpha}^{\text{out}} \in \mathbb{C}^{n}$, respectively. The elements of $g_{\alpha}^{\text{in}}$ and $g_{\alpha}^{\text{out}}$ at vertex $u$ are defined as follows:
\begin{align}
	\left( g_{\alpha}^{\text{in}} \right)_{u}  & \coloneqq
	\sum_{v\in N_{u}} g_{\overrightarrow{vu}}
	,\label{definition:in-vector}                          \\
	\left( g_{\alpha}^{\text{out}} \right)_{u} & \coloneqq
	\sum_{v\in \overleftrightarrow{N_u}} g_{\overrightarrow{uv}}
	+ \alpha \sum_{v \in \overrightarrow{N_u}} g_{\overrightarrow{uv}}
	+ \bar{\alpha} \sum_{v \in \overleftarrow{N_u}} g_{\overrightarrow{uv}}
	.\label{definition:out-vector}
\end{align}

\subsection{Basic properties}

We show some features about the number of mixed walks.
These results extend the properties in~\citep{Stark1996-mx,Tarfulea2009-sy} to mixed walks in the sense that traversing a directed edge in the reverse direction is permitted.
Let $r_k \coloneqq \sum_{r=0}^{R-1} \alpha^r P_{(k,r)} \in \mathbb{C}^{n\times n}$.
The value $(r_k)_{u,v}$ is analogous to the number of NBT walks of length $k$ between two vertices $u$ and $v$ in an undirected graph.
However, unlike the undirected case, we must distinguish whether each directed edge in a mixed walk of length $k$ is traversed in the forward or reverse direction.
The entry $(r_k)_{u,v}$ represents the $\alpha^r$-weighted sum of the number of NBT mixed walks of length $k$ and rotation $r$ from $u$ to $v$, taken over all possible rotations.
With respect to $r_k$, the following lemma holds.
\begin{lemma}\label{lemma:r_k}
	Let $G$ be a directed graph, then we have
	\begin{align*}
		r_1 & = A_{\alpha}
		,\quad
		r_2 = A_{\alpha}^2 - D
		,                                          \\
		r_k & = r_{k-1} A_{\alpha} - r_{k-2} (D-I)
		,\quad k\geq 3
		.
	\end{align*}
\end{lemma}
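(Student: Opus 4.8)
The plan is to prove the three identities by reading each matrix product as a weighted count of mixed walks, where a mixed walk $W$ is always counted with weight $\alpha^{r(W)}$; this is exactly what $r_k$ records, since $(r_k)_{u,v}=\sum_{r}\alpha^{r}(P_{(k,r)})_{u,v}=\sum_{W}\alpha^{r(W)}$, the sum ranging over NBT mixed walks $W$ of length $k$ from $u$ to $v$. The identity $r_1=A_\alpha$ is then immediate from the stated values $P_{(1,0)}=\overleftrightarrow{A}$, $P_{(1,1)}=\overrightarrow{A}$, $P_{(1,R-1)}=\overleftarrow{A}$ (all other $P_{(1,r)}$ vanishing): $r_1=\overleftrightarrow{A}+\alpha\overrightarrow{A}+\alpha^{R-1}\overleftarrow{A}$, and $\alpha^{R}=1$ with $|\alpha|=1$ forces $\alpha^{R-1}=\bar\alpha$, so $r_1=\overleftrightarrow{A}+\alpha\overrightarrow{A}+\bar\alpha\overleftarrow{A}=A_\alpha$.

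The key observation is a single cancellation: for any unoriented edge $uv\in E_U$ one has $r(u,v)+r(v,u)=0$, regardless of whether $u\leftrightarrow_G v$, $u\rightarrow_G v$, or $u\leftarrow_G v$ (a three-line case check). Hence traversing an edge and immediately reversing it contributes the multiplicative factor $\alpha^{0}=1$ to the walk weight. Together with additivity of the rotation under concatenation and the fact that $(A_\alpha)_{w,v}=\alpha^{r(w,v)}$ when $wv\in E_U$ and $0$ otherwise, this lets me track the overcounting created by appending one edge to an NBT mixed walk.

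The main step is then, for every $k\ge2$, to identify $(r_{k-1}A_\alpha)_{u,v}$ with $\sum_{W}\alpha^{r(W)}$ over all mixed walks $W$ of length $k$ from $u$ to $v$ whose length-$(k-1)$ prefix is NBT; this follows from the walk-count reading of $r_{k-1}$, the displayed form of $A_\alpha$, and rotation-additivity. Such a $W=(v_0,\dots,v_k)$ is itself NBT unless it backtracks at the final step, that is, unless $v_{k-2}=v_k$, because its prefix is already NBT and appending one edge can create a backtrack only in the last triple. Splitting on this dichotomy gives $r_{k-1}A_\alpha=r_k+S_k$, where $S_k$ weight-counts the last-step-backtracking walks. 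By the cancellation observation each such $W$ has $r(W)=r(W'')$ for its length-$(k-2)$ prefix $W''$, and conversely $W$ is recovered from an NBT mixed walk $W''$ of length $k-2$ from $u$ to $v$ by choosing $v_{k-1}\in N_v$ and returning to $v$; for $k\ge3$ the NBT constraint on the prefix excludes exactly the penultimate vertex of $W''$, leaving $d_v-1$ admissible choices of $v_{k-1}$, so $S_k=r_{k-2}(D-I)$ and hence $r_k=r_{k-1}A_\alpha-r_{k-2}(D-I)$; for $k=2$ there is no penultimate vertex, $W''=(u)$ has rotation $0$, and the entry is nonzero only when $u=v$, giving $S_2=D$ and $r_2=A_\alpha^2-D$.

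I expect the only real obstacle to be the bookkeeping peculiar to the mixed-walk setting, which is absent from the classical undirected computation of \citep{Stark1996-mx}: one must verify that the $\alpha^{r(\cdot)}$ weight is preserved under the reindexing of length-$k$ last-step-backtracking walks by length-$(k-2)$ NBT walks (this is precisely the cancellation observation), and that when appending an edge only the final triple of the walk can newly violate the NBT condition. Both are short case analyses rather than hard estimates, so the proof is essentially a careful weighted version of the standard telescoping recursion.
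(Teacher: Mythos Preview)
Your proof is correct and follows essentially the same approach as the paper: both extend an NBT mixed walk of length $k-1$ by one edge and subtract the last-step backtracks, counting $d_v-1$ such backtracks per length-$(k-2)$ NBT walk for $k\ge3$ and $d_v$ for $k=2$. The only cosmetic difference is that the paper first derives the recursion at the level of the $P_{(k,r)}$ (Eq.~\eqref{eq1:lemma:r_k}) and then sums with weights $\alpha^r$, whereas you work directly with the $\alpha$-weighted count $r_k$ using your cancellation observation $r(u,v)+r(v,u)=0$; the underlying combinatorics are identical.
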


\begin{proof}
	From the definition of $P_{(k,r)}$, it is clear that $r_1 = A_{\alpha}$.
	Next, to calculate $r_2$, we need to count NBT mixed walks.
	It is obvious that $P_{(2,3)} = \dots = P_{(2,R-3)} = \bm{0}$ since it is impossible to rotate more than twice in two steps.
	Thus, $r_2$ is calculated from $P_{(2,0)}, P_{(2,1)}, P_{(2,2)}, P_{(2,R-2)}$ and $P_{(2,R-1)}$.
	A mixed walk of length $2$ can be considered as an extension of a mixed walk of length $1$ by adding an edge.
	Therefore, a crucial point to consider is whether backtracking occurs when an edge is added.
	Mixed walks of length $2$ and rotation $0$ are composed of three types: adding an edge with rotation $0$ to a mixed walk of length $1$ and rotation $0$, adding an edge with rotation $-1$ to a mixed walk of length $1$ and rotation $1$, and adding an edge with rotation $1$ to a mixed walk of length $1$ and rotation $-1$.
	We note that these mixed walks contain backtracking mixed walks when the start and end points coincide.
	This is because a mixed walk of length $2$ consists of a sequence of edges that pass through three vertices.
	A backtracking mixed walk of length $2$ that starts and ends at vertex $u$ can be classified into one of three categories: (i) traversing a bidirectional edge to another vertex $v$ and returning to $u$ via the same edge, (ii) moving from $u$ to $v$ along an edge in the forward direction, then reversing and returning to $u$ via the same edge, or (iii) traversing a directed edge from $v$ to $u$ in the reverse direction, then proceeding in the forward direction back to $u$ via the same edge. Thus, $P_{(2,0)}$ can be expressed as $P_{(1,0)} \overleftrightarrow{A} + P_{(1,R-1)} \overrightarrow{A} + P_{(1,1)} \overleftarrow{A} - D$.
	Similarly, we find that
	\begin{align*}
		  & P_{(2,1)} =
		P_{(1,0)} \overrightarrow{A}
		+ P_{(1,1)} \overleftrightarrow{A}
		, & \quad
		  & P_{(2,2)} =
		P_{(1,1)} \overrightarrow{A}
		,                 \\
		  & P_{(2,R-1)} =
		P_{(1,0)} \overleftarrow{A}
		+ P_{(1,R-1)} \overleftrightarrow{A}
		, & \quad
		  & P_{(2,R-2)} =
		P_{(1,R-1)} \overleftarrow{A}
		.
	\end{align*}
	These results lead $r_2 = A_{\alpha}^2 - D$ from its definition.

	For $k \geq 3$, we similarly count the number of mixed walks of length $k-1$ extended by an edge, and then subtract the number of backtracking mixed walks. When considering $P_{(k,r)}$, there is a slight difference with $k=2$ in the way backtracking mixed walks are counted. The one-edge added mixed walks of length $k$ and rotation $r$ are expressed as $P_{(k-1,r)} \overleftrightarrow{A} + P_{(k-1,r-1)} \overrightarrow{A} + P_{(k-1,r+1)} \overleftarrow{A}$.
	This includes backtracking mixed walks, which are characterized by the absence of backtracking in the length $k-1$, but the addition of an edge induces backtracking.
	Namely, the backtracking mixed walks have the property that the vertex at length $k-2$ is the same as the vertex at length $k$.
	For each NBT mixed walk of length $k-2$ with rotation $r$, starting at vertex $u$ and ending at $v$, there exist $d_v-1$ such backtracking mixed walks. The subtraction of $1$ is necessary to prevent backtracking at length $k-1$. Thus,
	\begin{align}
		P_{(k,r)} = P_{(k-1,r)} \overleftrightarrow{A} + P_{(k-1,r-1)} \overrightarrow{A} + P_{(k-1,r+1)} \overleftarrow{A} - P_{(k-2,r)} (D - I)
		.\label{eq1:lemma:r_k}
	\end{align}
	This result holds in $r=0,\dots, R-1$.
	Substituting Eq.~\eqref{eq1:lemma:r_k} into the definition of $r_k$ proves the lemma.
\end{proof}

A similar relation that holds between $P_{(k,r)}$ and $\overleftrightarrow{A}, \overrightarrow{A}, \overleftarrow{A}$ also holds between $Q_{(k,r)}$ and $\overleftrightarrow{B}, \overrightarrow{B}, \overleftarrow{B}$.
\begin{lemma}\label{lemma:basic2}
	Let $G$ be a directed graph and $B_{\alpha}$ be the CNBT matrix defined in \eqref{definition:B_alpha}. Then, the following holds:
	\begin{align*}
		Q_{(k,r)} =
		\overleftrightarrow{B} Q_{(k-1,r)}
		+ \overrightarrow{B} Q_{(k-1,r-1)}
		+ \overleftarrow{B} Q_{(k-1,r+1)}
		,\quad
		k\geq 1
		.
	\end{align*}
\end{lemma}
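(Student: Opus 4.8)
The plan is to prove the identity entrywise, by decomposing each NBT mixed walk counted by $(Q_{(k,r)})_{ef}$ according to its first edge. Recall that $(Q_{(k,r)})_{ef}$ counts NBT mixed walks $W=(v_1,\dots,v_{k+1})$ of length $k$ and rotation $r$ whose last step $v_k v_{k+1}$ is traversed along $f$, and to which $e$ may be prepended without creating backtracking; the latter means precisely $t_e=v_1$ and $i_e\neq v_2$, i.e. $(B)_{eg}=1$ where $g$ denotes the first edge of $W$. The key observation is that this prependability condition depends only on $g$, not on the remainder of $W$.

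For $k\ge 2$ I would write $W=(g,W')$, where $W'=(v_2,\dots,v_{k+1})$ is the mixed walk of length $k-1$ obtained by deleting $v_1$. Then: (i) $W$ ends with $f$ iff $W'$ does; (ii) $W$ is NBT iff $W'$ is NBT and $g$ can be prepended to $W'$, since deleting or appending the leading edge can only create or destroy backtracking at the $g$--$W'$ junction; (iii) whether $e$ can be prepended to $W$ is governed by $(B)_{eg}$ alone; and (iv) $r(W)\equiv r(g)+r(W')\pmod R$, so $W'$ has rotation $r-r(g)\bmod R$. Hence, for fixed $g$ and $f$, the number of admissible $W'$ equals $(Q_{(k-1,\,r-r(g))})_{gf}$, and summing over all possible first edges $g$ gives
\begin{align*}
(Q_{(k,r)})_{ef} \;=\; \sum_{g\in\vec{E}} (B)_{eg}\,(Q_{(k-1,\,r-r(g))})_{gf}.
\end{align*}

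To finish, I would split the sum by the type of $g$. If $i_g\leftrightarrow_G t_g$ then $r(g)=0$ and, restricted to such $g$, $(B)_{eg}=(\overleftrightarrow{B})_{eg}$, contributing $(\overleftrightarrow{B}Q_{(k-1,r)})_{ef}$. If $i_g\rightarrow_G t_g$ then $r(g)=1$ and $(B)_{eg}=(\overrightarrow{B})_{eg}$, contributing $(\overrightarrow{B}Q_{(k-1,r-1)})_{ef}$. If $i_g\leftarrow_G t_g$ then $r(g)=-1\equiv R-1$ and $(B)_{eg}=(\overleftarrow{B})_{eg}$, contributing $(\overleftarrow{B}Q_{(k-1,\,r-(R-1))})_{ef}=(\overleftarrow{B}Q_{(k-1,r+1)})_{ef}$ since $r-(R-1)\equiv r+1\pmod R$. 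Adding the three groups yields the claim. The base case $k=1$ is handled directly: $(Q_{(1,r)})_{ef}=(B)_{ef}$ when $r$ equals the rotation of $f$ and $0$ otherwise, which together with $Q_{(0,0)}=I$, $Q_{(0,r)}=\bm{0}$ for $r\ne 0$ and the established identities $Q_{(1,0)}=\overleftrightarrow{B}$, $Q_{(1,1)}=\overrightarrow{B}$, $Q_{(1,R-1)}=\overleftarrow{B}$, $Q_{(1,r)}=\bm{0}$ otherwise, verifies the formula; one could equally run the whole argument as an induction on $k$.

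I expect the main obstacle to be the bookkeeping in steps (ii)--(iii): one must argue carefully that the NBT condition on $W$ factors cleanly as ``$W'$ is NBT'' together with ``$g$ is prependable to $W'$'', independently of the prependability of $e$, so that the inner count over $W'$ is genuinely $(Q_{(k-1,\cdot)})_{gf}$ and the outer sum over $g$ factorizes. The rotation arithmetic modulo $R$ and the identification of edge types with $\overleftrightarrow{B},\overrightarrow{B},\overleftarrow{B}$ are then routine.
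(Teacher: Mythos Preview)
Your proposal is correct and follows essentially the same approach as the paper: decompose each walk counted in $(Q_{(k,r)})_{ef}$ by peeling off its first edge (called $h$ in the paper, $g$ in your write-up), observe that the definition of $Q$ already encodes the NBT constraint at the junction so that the remaining walk is counted by $(Q_{(k-1,\cdot)})_{hf}$, and split the sum over $h$ by edge type. Your treatment is simply more explicit about the bookkeeping in steps (ii)--(iv) than the paper's two-sentence proof.
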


\begin{proof}
	Since a NBT mixed walk in $( Q_{(k,r)} )_{ef}$ can be represented by adding an edge $h$ to a NBT mixed walk of length $k-1$, it can be described as follows:
	\begin{align*}
		\sum_{h} \left[
			( \overleftrightarrow{B} )_{eh} ( Q_{(k-1,r)} )_{hf}
			+ ( \overrightarrow{B} )_{eh} ( Q_{(k-1,r-1)} )_{hf}
			+ ( \overleftarrow{B} )_{eh} ( Q_{(k-1,r+1)} )_{hf}
			\right]
		.
	\end{align*}
	This does not contain backtracking mixed walks because added edge $h$ can be connected to a NBT mixed walk of length $k-1$ while maintaining NBT due to the definition of $Q$.
\end{proof}

\begin{corollary}\label{corollary:power_of_B_alpha}
	For $k\geq 1$, $\displaystyle ( B_{\alpha} )^k = \sum_{r=0}^{R-1} \alpha^r Q_{(k,r)}$.
\end{corollary}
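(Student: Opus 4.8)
The plan is to argue by induction on $k$, using Lemma~\ref{lemma:basic2} as the engine for the inductive step. Throughout I would adopt the convention that the rotation index in $Q_{(k,r)}$ is read modulo $R$, so that $Q_{(k,-1)} = Q_{(k,R-1)}$ and $Q_{(k,R)} = Q_{(k,0)}$; this is consistent with the fact that rotations are defined mod $R$ and with the way $Q_{(k-1,r-1)}$ and $Q_{(k-1,r+1)}$ already appear in Lemma~\ref{lemma:basic2}. I would also record at the outset the identity $\bar{\alpha} = \alpha^{R-1}$, which holds because $\alpha^R = 1$ and $R$ is the (minimal) order of $\alpha$.

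For the base case $k = 1$, I would simply invoke the facts noted in the preliminaries, namely $Q_{(1,0)} = \overleftrightarrow{B}$, $Q_{(1,1)} = \overrightarrow{B}$, $Q_{(1,R-1)} = \overleftarrow{B}$, and $Q_{(1,r)} = \bm{0}$ for $r = 2,\dots,R-2$. Hence $\sum_{r=0}^{R-1}\alpha^r Q_{(1,r)} = \overleftrightarrow{B} + \alpha\overrightarrow{B} + \alpha^{R-1}\overleftarrow{B} = \overleftrightarrow{B} + \alpha\overrightarrow{B} + \bar{\alpha}\overleftarrow{B} = B_{\alpha} = (B_\alpha)^1$.

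For the inductive step, assuming the claim for $k$, I would write $(B_\alpha)^{k+1} = B_\alpha (B_\alpha)^k$ and expand $B_\alpha = \overleftrightarrow{B} + \alpha\overrightarrow{B} + \bar{\alpha}\overleftarrow{B}$ against $\sum_{r}\alpha^r Q_{(k,r)}$, obtaining three sums. In the sum coming from $\alpha\overrightarrow{B}$ I would shift the summation index by $+1$ modulo $R$, and in the sum coming from $\bar{\alpha}\overleftarrow{B} = \alpha^{R-1}\overleftarrow{B}$ I would shift it by $-1$ modulo $R$, using $\alpha^R = 1$ to bring every power of $\alpha$ back into the range $0,\dots,R-1$. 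After relabeling, the coefficient of $\alpha^r$ is exactly $\overleftrightarrow{B}Q_{(k,r)} + \overrightarrow{B}Q_{(k,r-1)} + \overleftarrow{B}Q_{(k,r+1)}$, which equals $Q_{(k+1,r)}$ by Lemma~\ref{lemma:basic2}. Summing over $r$ yields $(B_\alpha)^{k+1} = \sum_{r=0}^{R-1}\alpha^r Q_{(k+1,r)}$, completing the induction.

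The only step demanding genuine care — and what I would call the main, if modest, obstacle — is the bookkeeping of the cyclic index shifts: one must ensure that reducing the exponent of $\alpha$ modulo $R$ is matched by reducing the rotation index of $Q$ modulo $R$ in precisely the same way, and that this matches the convention under which Lemma~\ref{lemma:basic2} is stated (so that, e.g., the $r=0$ term of the $\overleftarrow{B}$-sum correctly contributes $Q_{(k,1)}$ and the $r=R-1$ term of the $\overrightarrow{B}$-sum correctly contributes $Q_{(k,R-2)}$). Everything else is a routine manipulation of finite sums.
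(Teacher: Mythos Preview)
Your proof is correct and follows essentially the same route as the paper: induction on $k$, with the base case read off from the identification of $Q_{(1,r)}$ and the inductive step driven by Lemma~\ref{lemma:basic2} together with the cyclic index shifts $Q_{(k,-1)}=Q_{(k,R-1)}$, $Q_{(k,R)}=Q_{(k,0)}$ and $\alpha^{-1}=\bar{\alpha}$. The only cosmetic difference is direction: the paper starts from $\sum_r \alpha^r Q_{(k+1,r)}$ and collapses it to $B_\alpha \cdot (B_\alpha)^k$, whereas you start from $(B_\alpha)^{k+1}$ and expand.
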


\begin{proof}
	We prove Corollary~\ref{corollary:power_of_B_alpha} by induction.
	This corollary clearly holds for $k=1$.
	Suppose that this corollary holds when $k=k' \geq 1$. Then, we will show $( B_{\alpha} )^{k'+1} = \sum_{r=0}^{R-1} \alpha^{r} Q_{(k'+1,r)}$. From lemma~\ref{lemma:basic2},
	\begin{align*}
		\sum_{r=0}^{R-1} \alpha^{r} Q_{(k'+1,r)} & =
		\overleftrightarrow{B} \sum_{r=0}^{R-1} \alpha^{r} Q_{(k',r)}
		+ \alpha \overrightarrow{B} \sum_{r=0}^{R-1} \alpha^{r-1} Q_{(k',r-1)}
		+ \alpha^{-1} \overleftarrow{B} \sum_{r=0}^{R-1} \alpha^{r+1} Q_{(k', r+1)}
		,                                                                                                                                                                         \\
		                                         & \text{note that }
		\alpha^{-1} = \alpha^{R-1} = \bar{\alpha},\;
		\alpha^{R} = \alpha^{0} = 1,\;
		Q_{(k',-1)} = Q_{(k',R-1)},\;
		Q_{(k',R)} = Q_{(k',0)}
		\\
		                                         & = (\overleftrightarrow{B} + \alpha \overrightarrow{B} + \bar{\alpha} \overleftarrow{B}) \sum_{r=0}^{R-1} \alpha^{r} Q_{(k',r)}
		= ( B_{\alpha} )^{k'+1}
		.
	\end{align*}
	By induction, this result proves Corollary~\ref{corollary:power_of_B_alpha}.
\end{proof}

\section{Relations with the Hermitian adjacency matrix}\label{section:relation_with_herm_adj_mat}

In this section, we show that the newly introduced CNBT matrix is a natural extension of the conventional NBT matrix,
in the sense that it satisfies the relations between the traditional adjacency matrix and the NBT matrix.
This is supported by two results, Corollary~\ref{corollary:ihara_formula} and Theorem~\ref{theorem:edge-wise_to_node-wise}.

\subsection{Relation via the Ihara's formula}

We discuss the complex version of the Ihara's formula, which is a theorem that describes the zeta function of a graph can be expressed as a rational function, and it is also one of the equations that connects the adjacency matrix and the NBT matrix.
The Ihara zeta function on undirected graphs $Z_X(u)$ is defined as a product over all equivalence classes of NBT cycles without a tail~\citep{Stark1996-mx}:
\begin{align*}
	Z_{X}(u) \coloneqq \prod_{\substack{[C] \\\text{primitive}\\\text{NBT, no tail}}} (1 - u^{\left| C \right|})^{-1}
	.
\end{align*}
Corollary~\ref{corollary:ihara_formula} provides a variant of Ihara's formula, allowing reverse traversal of directed edges.
To derive this result, we use the weighted zeta function of directed graphs as follows~\citep{Konno2019-jd}:
\begin{align*}
	z_{X}(u; \alpha) \coloneqq \prod_{\substack{[C] \\\text{primitive}\\\text{NBT, no tail}}} (1 - \alpha^{r(C)} u^{\left| C \right|})^{-1}
	.
\end{align*}
In the zeta function for undirected graphs, equivalence classes of NBT cycles are formed based on cycle length.
In contrast, this paper introduces a concept of rotation for directed graphs, where the equivalence classes of cycles are determined by considering not only their lengths but also their rotations.
As a result, the rotation information is encoded as a weight in the weighted zeta function.

The Hermitian adjacency matrix defined as~\eqref{definition:A_alpha} is a type of the weighted matrix defined in~\citep{Konno2019-jd}.
Considering Theorem~3 in~\citep{Konno2019-jd}, the following result is derived.

\begin{corollary}\label{corollary:ihara_formula}
	Let $G$ be a directed graph on $n$ vertices and $m$ unoriented edges. Then,
	\begin{align*}
		\det (I-u B_{\alpha}) = (1-u^2)^{m-n} \det \left( I - u A_{\alpha} + u^2 (D - I) \right)
		.
	\end{align*}
\end{corollary}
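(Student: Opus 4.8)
The plan is to deduce Corollary~\ref{corollary:ihara_formula} from the known Ihara-type formula for weighted zeta functions of directed graphs in~\citep{Konno2019-jd}, and then independently verify that both sides of the asserted identity agree with the appropriate specialisation of that formula. First I would recall the precise statement of Theorem~3 in~\citep{Konno2019-jd}: for a directed graph equipped with an edge weighting $w$, the weighted zeta function factors as a ratio involving $\det(I - u W)$ on the edge side and a determinant $\det(I - u M_w + u^2(\text{something}))$ on the vertex side, where $W$ is the weighted edge-transfer matrix and $M_w$ the weighted vertex matrix. The key observation, already flagged in the excerpt, is that $A_\alpha$ in~\eqref{definition:A_alpha} is exactly the weighted vertex matrix obtained by giving the directed edge $\overrightarrow{uv}$ the weight $\alpha$ when $u\rightarrow_G v$, $\bar\alpha$ when $u\leftarrow_G v$, and $1$ when $u\leftrightarrow_G v$; correspondingly $B_\alpha = B\Lambda$ is the weighted edge-transfer matrix, since $(B\Lambda)_{ef} = (B)_{ef}\lambda_f = \delta_{t_e i_f}(1-\delta_{i_e t_f})\lambda_f$ attaches exactly the weight $\lambda_f$ of the edge being traversed. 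I would make this dictionary explicit so that the substitution into the cited theorem is unambiguous.

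Next I would handle the ``tail vs.\ no tail'' and ``degree'' bookkeeping. The classical Ihara formula has the prefactor $(1-u^2)^{m-n}$ and the vertex-side term $u^2(D-I)$, and I would check that under the weighting by $\alpha$ these are unchanged: the prefactor $(1-u^2)^{m-n}$ comes from counting unoriented edges and vertices and is weight-insensitive because backtracking pairs $e, e^{-1}$ contribute $\lambda_e\lambda_{e^{-1}} = \alpha\bar\alpha = 1$ (or $1\cdot 1 = 1$); and the $u^2(D-I)$ term arises from the degree matrix $D = \mathrm{diag}(d_1,\dots,d_n)$ with $d_u = \overleftarrow{d_u}+\overrightarrow{d_u}+\overleftrightarrow{d_u}$, which is precisely the number of unoriented edges incident to $u$ in $U_G$, i.e.\ the degree relevant to the non-backtracking construction. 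So the cited theorem, specialised to weights of modulus $1$ obeying $w(e^{-1}) = \overline{w(e)}$, yields exactly $\det(I - uB_\alpha) = (1-u^2)^{m-n}\det(I - uA_\alpha + u^2(D-I))$.

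As an alternative (or a sanity check) I would sketch the self-contained derivation mirroring Stark--Sunada~\citep{Stark1996-mx}: take $\log\det(I-uB_\alpha)^{-1} = \mathrm{tr}\,\log(I-uB_\alpha)^{-1} = \sum_{k\ge 1}\frac{u^k}{k}\mathrm{tr}\,(B_\alpha)^k$, use Corollary~\ref{corollary:power_of_B_alpha} to write $\mathrm{tr}(B_\alpha)^k = \sum_{r}\alpha^r\,\mathrm{tr}\,Q_{(k,r)}$, identify this trace with the $\alpha^r$-weighted count of closed NBT mixed walks of length $k$ and rotation $r$, and reorganise the sum over such closed walks into a sum over equivalence classes $[C]$ to recover $\log z_X(u;\alpha)$; then on the vertex side use Lemma~\ref{lemma:r_k} and the three-term recursion to sum $\sum_k u^k\,\mathrm{tr}\,r_k$ into $-u\frac{d}{du}\log\det(I - uA_\alpha + u^2(D-I)) \cdot(\dots)$ plus the $(1-u^2)^{m-n}$ correction accounting for cycles with tails. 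I expect the main obstacle to be exactly this last reorganisation — correctly accounting for the discrepancy between counting closed NBT mixed walks (which is what the traces give) and counting tail-free primitive cycle classes (which is what the zeta function records), together with tracking the rotation weight $\alpha^{r(C)}$ consistently through the $\bmod R$ reduction; this is where the factor $(1-u^2)^{m-n}$ and the shift $D \mapsto D-I$ enter, and it is the part most prone to sign and off-by-one errors. For the paper I would lean on the clean route via~\citep{Konno2019-jd}, presenting the weight dictionary carefully and citing Theorem~3 there, and relegate the walk-counting derivation to a remark.
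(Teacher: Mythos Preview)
Your alternative ``sanity check'' route is essentially the paper's proof. The paper shows both sides equal $z_X(u;\alpha)^{-1}$: for the vertex side it simply cites Theorem~3 of~\citep{Konno2019-jd}, and for the edge side it expands $\log\det(I-uB_\alpha)$ as $-\sum_{k\ge 1}\frac{u^k}{k}\mathrm{tr}(B_\alpha^k)$, applies Corollary~\ref{corollary:power_of_B_alpha} to get $\mathrm{tr}(B_\alpha^k)=\sum_r\alpha^r\sum_e(Q_{(k,r)})_{ee}$, and identifies $\sum_e(Q_{(k,r)})_{ee}=n_{(k,r)}$. The paper then matches logarithmic derivatives (it differentiates $u\frac{d}{du}\log$ on both sides, which is the clean way to collapse the sum over primitive classes and their powers into a sum over all tailless NBT cycles). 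So your sketch is right, but note two things. First, your preferred route assumes Theorem~3 of~\citep{Konno2019-jd} already supplies the edge-side determinant $\det(I-uW)$; the paper reads that theorem as giving only the vertex-side expression, which is why it does the $B_\alpha$ trace computation by hand. If you want to lean entirely on the citation you should check exactly what is stated there. Second, the obstacle you anticipate --- the tail/no-tail discrepancy and the emergence of $(1-u^2)^{m-n}$ --- does not arise on the edge side at all: the diagonal entries $(Q_{(k,r)})_{ee}$ already count \emph{tailless} NBT cycles, because by the definition of $Q$ the first edge of the walk must be prependable to $e$ without backtracking. The $(1-u^2)^{m-n}$ factor and the $D\to D-I$ shift live entirely in the vertex-side identity, which the paper outsources to~\citep{Konno2019-jd}; your proposed re-derivation of that side via Lemma~\ref{lemma:r_k} would work but is more than the paper does.
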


\begin{proof}
	Our proof follows the general outline of the proof for Ihara's formula described in~\citep{Stark1996-mx}.
	We show that both sides are equal to the reciprocal of $z_{X}(u;\alpha)$.
	Since Theorem~3 in~\citep{Konno2019-jd} shows that the right-hand side is equal to the reciprocal to the zeta function,
	we prove $z_{X}(u; \alpha)^{-1} = \det (I - u B_{\alpha})$.

	Applying the Maclaurin expansion to $z_{X}(u;\alpha)$,
	\begin{align*}
		\log z_{X}(u; \alpha)
		= - \sum_{\substack{[C] \\\text{primitive}\\\text{NBT, no tail}}} \log \left( 1 - \alpha^{r(C)} u^{\left| C \right|} \right)
		= \sum_{\substack{[C]   \\\text{primitive}\\\text{NBT, no tail}}} \sum_{k\geq 1} \frac{\alpha^{k r(C)}}{k} u^{k \left| C \right|}
		.
	\end{align*}
	For $C_1, C_2 \in [C],\; r(C_1) = r(C_2)$ and $\#[C] = \left| C \right|,\; kr(C) = r(C^k), k\left| C \right| = |C^k|$ hold. Thus,
	\begin{align*}
		u \frac{d}{du} \log z_{X}(u; \alpha)
		= \sum_{\substack{[C] \\\text{primitive}\\\text{NBT, no tail}}} \sum_{k\geq 1} \left| C \right| \alpha^{k r(C)} u^{k \left| C \right|}
		= \sum_{\substack{C   \\\text{NBT, no tail}}} \alpha^{r(C)} u^{\left| C \right|}
		.
	\end{align*}
	Note that when partitioning all NBT cycles with no tail by length and rotation,
	the NBT cycles within the same group will have the same $r(C)$ and $|C|$. Thus,
	\begin{align}
		u \frac{d}{du} \log z_{X}(u; \alpha)
		 & = \sum_{k\geq 1} \sum_{r=0}^{R-1} \alpha^r n_{(k,r)} u^k
		.\label{eq1:corollary:ihara_formula}
	\end{align}

	With respect to $\det (I - u B_{\alpha})$,
	the definition of matrix logarithm and $\exp\left[ \mathrm{tr}\left( M \right) \right] = \det e^M$ for a matrix $M$ derive:
	\begin{align*}
		\log\det (I - u B_{\alpha})
		= - \sum_{k \geq 1} \frac{u^k}{k} \mathrm{tr}\left( B_{\alpha}^k \right)
		.
	\end{align*}
	Applying corollary~\ref{corollary:power_of_B_alpha} implies
	\begin{align*}
		u \frac{d}{du} \log\det \left( I - u B_{\alpha} \right)
		= - \sum_{k\geq 1} \sum_{r=0}^{R-1} \alpha^r \left( \sum_{e \in \vec{E}} \left( Q_{(k,r)} \right)_{ee} \right) u^k
		.
	\end{align*}
	The sum $\sum_{e \in \vec{E}} \left( Q_{(k,r)} \right)_{ee}$ counts the number of NBT cycles.
	For each cycle, the initial and terminal vertex is $t_e$, the last edge is $e$, and the initial edge can be connected to the trailing edge $e$ while maintaining NBT.
	This means the cycles don't have a tail and $\sum_{e \in \vec{E}} \left( Q_{(k,r)} \right)_{ee} = n_{(k,r)}$.
	From Eq.~\eqref{eq1:corollary:ihara_formula},
	\begin{align*}
		u \frac{d}{du} \log\det \left( I - u B_{\alpha} \right)
		 & = u \frac{d}{du} \log z_{X}(u; \alpha)^{-1}
		,                                              \\
		\therefore
		z_{X}(u; \alpha)^{-1}
		 & = \det \left( I - u B_{\alpha} \right)
		.
	\end{align*}
\end{proof}

\subsection{Relation via in/out-vectors}
As will be discussed in Section~\ref{section:application},
it is significant that in/out-vectors can be efficiently obtained from the eigenvectors of the CNBT matrix in applications.
Theorem~\ref{theorem:edge-wise_to_node-wise} is an analogous result that is known to hold between the commonly known adjacency matrix and the NBT matrix of undirected graphs~\citep{Krzakala2013-up}.

\begin{theorem}\label{theorem:edge-wise_to_node-wise}
	For any $g \in \mathbb{C}^{2m}$, the following holds:
	\begin{align}
		\begin{pmatrix}
			\left( B_{\alpha} g \right)_{\alpha}^{\text{out}}
			\\
			\left( B_{\alpha} g \right)_{\alpha}^{\text{in}}
		\end{pmatrix} =
		\begin{pmatrix}
			A_{\alpha} & -I
			\\
			D-I        & \bm{0}
		\end{pmatrix} \begin{pmatrix} g_{\alpha}^{\text{out}} \\ g_{\alpha}^{\text{in}} \end{pmatrix}
		.\label{eq:theorem:edge-wise_to_node-wise}
	\end{align}
\end{theorem}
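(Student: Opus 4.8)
The plan is to expand both components of the left-hand side of \eqref{eq:theorem:edge-wise_to_node-wise} directly from the definitions of $B_{\alpha} = B\Lambda$, the in-vector, and the out-vector, and then to match the resulting expressions against the right-hand side coordinate by coordinate at each vertex $u$.

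First I would record the elementary facts that drive the computation. Set $h \coloneqq B_{\alpha} g$. From $(B_{\alpha})_{ef} = (B)_{ef}\lambda_f$ and $(B)_{ef} = \delta_{t_e i_f}(1 - \delta_{i_e t_f})$ one gets $h_e = \sum_{f\colon i_f = t_e,\ t_f \neq i_e} \lambda_f g_f$; in particular $h_{\overrightarrow{uv}} = \sum_{f\colon i_f = v,\ t_f \neq u} \lambda_f g_f$ and $h_{\overrightarrow{vu}} = \sum_{f\colon i_f = u,\ t_f \neq v} \lambda_f g_f$. Two identities will be used repeatedly: (i) since the edges $f$ with $i_f = u$ are exactly the $\overrightarrow{uv}$ with $v \in N_u$, and $\lambda_{\overrightarrow{uv}}$ equals $1,\alpha,\bar\alpha$ according as $v\in\overleftrightarrow{N_u},\overrightarrow{N_u},\overleftarrow{N_u}$, we have $\sum_{f\colon i_f = u}\lambda_f g_f = (g_{\alpha}^{\text{out}})_u$; and (ii) $\lambda_{\overrightarrow{uv}}\lambda_{\overrightarrow{vu}} = 1$ for every $uv \in E_U$, because the two orientations of an edge carry reciprocal weights ($1\cdot 1$, $\alpha\cdot\bar\alpha$, or $\bar\alpha\cdot\alpha$). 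Note that (i) applied to the definition of the out-vector also gives $(g_{\alpha}^{\text{out}})_u = \sum_{v\in N_u}\lambda_{\overrightarrow{uv}} g_{\overrightarrow{uv}}$.

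For the in-vector row I would compute $(h_{\alpha}^{\text{in}})_u = \sum_{v\in N_u} h_{\overrightarrow{vu}} = \sum_{v\in N_u}\sum_{f\colon i_f = u,\ t_f \neq v}\lambda_f g_f$ and swap the order of summation: a fixed edge $f$ with $i_f = u$ is counted once for each $v \in N_u$ with $v \neq t_f$, and since $t_f \in N_u$ there are exactly $d_u - 1$ such $v$. Hence $(h_{\alpha}^{\text{in}})_u = (d_u - 1)\sum_{f\colon i_f = u}\lambda_f g_f = (d_u - 1)(g_{\alpha}^{\text{out}})_u$ by (i), which is exactly the bottom block $(B_{\alpha}g)_{\alpha}^{\text{in}} = (D-I)g_{\alpha}^{\text{out}}$.

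For the out-vector row I would use $(h_{\alpha}^{\text{out}})_u = \sum_{v\in N_u}\lambda_{\overrightarrow{uv}} h_{\overrightarrow{uv}} = \sum_{v\in N_u}\lambda_{\overrightarrow{uv}}\sum_{f\colon i_f = v,\ t_f \neq u}\lambda_f g_f$, and split off the single missing term by writing $\sum_{f\colon i_f = v,\ t_f \neq u}\lambda_f g_f = (g_{\alpha}^{\text{out}})_v - \lambda_{\overrightarrow{vu}} g_{\overrightarrow{vu}}$ (valid since $\overrightarrow{vu}$ exists because $v\in N_u$, using (i)). The main term becomes $\sum_{v\in N_u}\lambda_{\overrightarrow{uv}}(g_{\alpha}^{\text{out}})_v$, which upon splitting $N_u$ into $\overleftrightarrow{N_u},\overrightarrow{N_u},\overleftarrow{N_u}$ and reading off the weights $1,\alpha,\bar\alpha$ is precisely $(A_{\alpha}g_{\alpha}^{\text{out}})_u$; the correction term becomes $-\sum_{v\in N_u}\lambda_{\overrightarrow{uv}}\lambda_{\overrightarrow{vu}} g_{\overrightarrow{vu}} = -\sum_{v\in N_u} g_{\overrightarrow{vu}} = -(g_{\alpha}^{\text{in}})_u$ by (ii). This gives the top block $(B_{\alpha}g)_{\alpha}^{\text{out}} = A_{\alpha}g_{\alpha}^{\text{out}} - g_{\alpha}^{\text{in}}$, completing the proof. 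The whole argument is bookkeeping; the only delicate point is the out-vector computation, where one must track how the weight $\lambda_{\overrightarrow{uv}}$ built into the out-vector combines with the weight $\lambda_f$ inside $h$ — it is precisely the cancellation $\lambda_{\overrightarrow{uv}}\lambda_{\overrightarrow{vu}} = 1$ that makes the off-diagonal block come out as exactly $-I$ with no $\alpha$-factors.
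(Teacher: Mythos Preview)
Your proof is correct and follows essentially the same approach as the paper: both expand $((B_{\alpha}g)_{\alpha}^{\text{out}})_u$ and $((B_{\alpha}g)_{\alpha}^{\text{in}})_u$ from the definitions, use that $(B_{\alpha}g)_{\overrightarrow{uv}} = (g_{\alpha}^{\text{out}})_v - \lambda_{\overrightarrow{vu}} g_{\overrightarrow{vu}}$, and rely on the cancellation $\lambda_{\overrightarrow{uv}}\lambda_{\overrightarrow{vu}}=1$ to strip the $\alpha$-weights from the correction term. The only cosmetic differences are that the paper splits every sum into the three neighborhood types $\overleftrightarrow{N_u},\overrightarrow{N_u},\overleftarrow{N_u}$ explicitly rather than working with $\lambda_e$ uniformly, and for the in-vector it obtains the factor $d_u-1$ by recognizing the subtracted terms as $(g_{\alpha}^{\text{out}})_u$ rather than by your (equally valid) swap of summation order.
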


\begin{proof}
	Note that $( \left( B_{\alpha} g \right)_{\alpha}^{\text{out}} )_{u}$ can be written as follows based on the definition of \eqref{definition:out-vector}.
	\begin{align*}
		\left( \left( B_{\alpha} g \right)_{\alpha}^{\text{out}} \right)_{u} & =
		\sum_{v \in \overleftrightarrow{N_u}} \left( B_{\alpha} g \right)_{\overrightarrow{uv}}
		+ \alpha \sum_{v \in \overrightarrow{N_u}} \left( B_{\alpha} g \right)_{\overrightarrow{uv}}
		+ \bar{\alpha} \sum_{v \in \overleftarrow{N_u}} \left( B_{\alpha} g \right)_{\overrightarrow{uv}}
		.
	\end{align*}
	We can calculate each terms as follows:
	\begin{align*} \left\{\begin{aligned}
			       \sum_{v \in \overleftrightarrow{N_u}} \left( B_{\alpha} g \right)_{\overrightarrow{uv}}
			        & =  \sum_{v \in \overleftrightarrow{N_u}} \left( g_{\alpha}^{\text{out}} \right)_{v}
			       - \sum_{v \in \overleftrightarrow{N_u}} g_{\overrightarrow{vu}}
			       ,                                                                                              \\
			       \alpha \sum_{v \in \overrightarrow{N_u}} \left( B_{\alpha} g \right)_{\overrightarrow{uv}}
			        & =  \alpha \sum_{v \in \overrightarrow{N_u}} \left( g_{\alpha}^{\text{out}} \right)_{v}
			       - \sum_{v \in \overrightarrow{N_u}} g_{\overrightarrow{vu}}
			       ,                                                                                              \\
			       \bar{\alpha} \sum_{v \in \overleftarrow{N_u}} \left( B_{\alpha} g \right)_{\overrightarrow{uv}}
			        & =  \bar{\alpha} \sum_{v \in \overleftarrow{N_u}} \left( g_{\alpha}^{\text{out}} \right)_{v}
			       - \sum_{v \in \overleftarrow{N_u}} g_{\overrightarrow{vu}}
			       .
		       \end{aligned}\right.
	\end{align*}
	This leads to $( \left( B_{\alpha} g \right)_{\alpha}^{\text{out}} )_{u} = ( A_{\alpha} g_{\alpha}^{\text{out}} )_{u} - ( g_{\alpha}^{\text{in}} )_{u}$.

	Similarly, $( \left( B_{\alpha} g \right)_{\alpha}^{\text{in}} )_{v}$ can be expressed according to the definition of \eqref{definition:in-vector} as follows,
	and each term can be calculated as shown below.
	\begin{align*}
		\left( \left( B_{\alpha} g \right)_{\alpha}^{\text{in}} \right)_{v} & =
		\sum_{u \in \overleftrightarrow{N_v}} \left( B_{\alpha} g \right)_{\overrightarrow{uv}}
		+ \sum_{u \in \overleftarrow{N_v}} \left( B_{\alpha} g \right)_{\overrightarrow{uv}}
		+ \sum_{u \in \overrightarrow{N_v}} \left( B_{\alpha} g \right)_{\overrightarrow{uv}}
		,                                                                                                                                                                                        \\
		                                                                    & \left\{\begin{aligned}
			                                                                             \sum_{u \in \overleftrightarrow{N_v}} \left( B_{\alpha} g \right)_{\overrightarrow{uv}} & =
			                                                                             \sum_{u \in \overleftrightarrow{N_v}} \left( g_{\alpha}^{\text{out}} \right)_{v}
			                                                                             - \sum_{u \in \overleftrightarrow{N_v}} g_{\overrightarrow{vu}}
			                                                                             ,                                                                                           \\
			                                                                             \sum_{u \in \overleftarrow{N_v}} \left( B_{\alpha} g \right)_{\overrightarrow{uv}}      & =
			                                                                             \sum_{u \in \overleftarrow{N_v}} \left( g_{\alpha}^{\text{out}} \right)_{v}
			                                                                             - \bar{\alpha} \sum_{u \in \overleftarrow{N_v}} g_{\overrightarrow{vu}}
			                                                                             ,                                                                                           \\
			                                                                             \sum_{u \in \overrightarrow{N_v}} \left( B_{\alpha} g \right)_{\overrightarrow{uv}}     & =
			                                                                             \sum_{u \in \overrightarrow{N_v}} \left( g_{\alpha}^{\text{out}} \right)_{v}
			                                                                             - \alpha \sum_{u \in \overrightarrow{N_v}} g_{\overrightarrow{vu}}
			                                                                             .
		                                                                             \end{aligned}\right.
	\end{align*}
	This leads to $( \left( B_{\alpha} g \right)_{\alpha}^{\text{in}} )_{v} = (d_v - 1) ( g_{\alpha}^{\text{out}} )_{v}$. From these results, we obtain the theorem.
\end{proof}

\begin{corollary}
	Let $(\lambda, \bm{u}) \in \mathbb{C}\times \mathbb{C}^{2m}$ be an eigenpair for $B_{\alpha}$. Then,
	$(\lambda, \begin{pmatrix} \bm{u}_{\alpha}^{\text{out}} \\ \bm{u}_{\alpha}^{\text{in}} \end{pmatrix})$ is an eigenpair for the matrix
	described in the right-hand side of~\eqref{eq:theorem:edge-wise_to_node-wise}.
\end{corollary}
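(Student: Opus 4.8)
The plan is to derive the corollary directly from Theorem~\ref{theorem:edge-wise_to_node-wise} by applying the identity~\eqref{eq:theorem:edge-wise_to_node-wise} to the eigenvector $\bm{u}$. First I would observe that the maps $g \mapsto g_{\alpha}^{\mathrm{out}}$ and $g \mapsto g_{\alpha}^{\mathrm{in}}$ are linear in $g$, which is immediate from the defining sums~\eqref{definition:in-vector} and~\eqref{definition:out-vector}. Consequently, if $B_{\alpha}\bm{u} = \lambda\bm{u}$, then $(B_{\alpha}\bm{u})_{\alpha}^{\mathrm{out}} = (\lambda\bm{u})_{\alpha}^{\mathrm{out}} = \lambda\,\bm{u}_{\alpha}^{\mathrm{out}}$ and likewise $(B_{\alpha}\bm{u})_{\alpha}^{\mathrm{in}} = \lambda\,\bm{u}_{\alpha}^{\mathrm{in}}$.

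Next I would substitute $g = \bm{u}$ into Theorem~\ref{theorem:edge-wise_to_node-wise}, so that the left-hand side of~\eqref{eq:theorem:edge-wise_to_node-wise} becomes $\begin{pmatrix} \lambda\,\bm{u}_{\alpha}^{\mathrm{out}} \\ \lambda\,\bm{u}_{\alpha}^{\mathrm{in}} \end{pmatrix} = \lambda \begin{pmatrix} \bm{u}_{\alpha}^{\mathrm{out}} \\ \bm{u}_{\alpha}^{\mathrm{in}} \end{pmatrix}$, while the right-hand side is exactly $\begin{pmatrix} A_{\alpha} & -I \\ D-I & \bm{0} \end{pmatrix} \begin{pmatrix} \bm{u}_{\alpha}^{\mathrm{out}} \\ \bm{u}_{\alpha}^{\mathrm{in}} \end{pmatrix}$. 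Equating the two gives the eigenvalue equation for the $2n \times 2n$ matrix on the right-hand side of~\eqref{eq:theorem:edge-wise_to_node-wise}, with the same eigenvalue $\lambda$ and eigenvector $\begin{pmatrix} \bm{u}_{\alpha}^{\mathrm{out}} \\ \bm{u}_{\alpha}^{\mathrm{in}} \end{pmatrix}$.

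The one genuine subtlety — and the only point worth addressing explicitly — is that an eigenpair requires a nonzero eigenvector, so one must argue that $\begin{pmatrix} \bm{u}_{\alpha}^{\mathrm{out}} \\ \bm{u}_{\alpha}^{\mathrm{in}} \end{pmatrix} \neq \bm{0}$. I would handle this by contradiction: suppose both $\bm{u}_{\alpha}^{\mathrm{out}} = \bm{0}$ and $\bm{u}_{\alpha}^{\mathrm{in}} = \bm{0}$. Feeding this back into the identity of Theorem~\ref{theorem:edge-wise_to_node-wise} applied to $g = \bm{u}$ gives $\begin{pmatrix} (B_{\alpha}\bm{u})_{\alpha}^{\mathrm{out}} \\ (B_{\alpha}\bm{u})_{\alpha}^{\mathrm{in}} \end{pmatrix} = \bm{0}$, i.e. $(\lambda\bm{u})_{\alpha}^{\mathrm{out}} = (\lambda\bm{u})_{\alpha}^{\mathrm{in}} = \bm{0}$. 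If $\lambda \neq 0$ this forces $\bm{u}_{\alpha}^{\mathrm{out}} = \bm{u}_{\alpha}^{\mathrm{in}} = \bm{0}$ again, and iterating shows $(B_{\alpha}^k \bm{u})_{\alpha}^{\mathrm{out}} = \bm{0}$ for all $k$; combined with the combinatorial interpretation of $B_{\alpha}^k$ via $Q_{(k,r)}$ from Corollary~\ref{corollary:power_of_B_alpha}, one can see that the in/out-vectors of the full NBT-walk history vanish, which is incompatible with $\bm{u} \neq \bm{0}$ on a graph without isolated structure. If $\lambda = 0$, then $\bm{u}$ lies in the kernel of $B_{\alpha}$, and the statement degenerates to exhibiting $\bm{0}$ as an eigenvector, which is vacuous; in practice the intended reading is for $\lambda \neq 0$, following the convention in~\citep{Krzakala2013-up}, and I would simply note that the nonzero eigenvalues of $B_{\alpha}$ lift to eigenvalues of the $2n\times 2n$ matrix. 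The main obstacle, then, is not any calculation but pinning down this nondegeneracy claim cleanly; everything else is a one-line substitution using the linearity of the in/out-vector maps and Theorem~\ref{theorem:edge-wise_to_node-wise}.
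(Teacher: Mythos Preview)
Your core argument---apply Theorem~\ref{theorem:edge-wise_to_node-wise} with $g=\bm{u}$ and use linearity of the in/out maps---is exactly what the paper does; its entire proof is the sentence ``This is evident from Theorem~\ref{theorem:edge-wise_to_node-wise}.''

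You go further and flag the nondegeneracy issue, which the paper simply ignores. That instinct is good, but your attempted resolution does not work. The iteration step is vacuous: once $B_{\alpha}\bm{u}=\lambda\bm{u}$, you already have $B_{\alpha}^{k}\bm{u}=\lambda^{k}\bm{u}$, so $(B_{\alpha}^{k}\bm{u})_{\alpha}^{\mathrm{out}}=\lambda^{k}\bm{u}_{\alpha}^{\mathrm{out}}=0$ follows immediately from the assumption and carries no new information. The appeal to Corollary~\ref{corollary:power_of_B_alpha} and ``no isolated structure'' is too vague to close the gap. In fact the nondegeneracy claim is false in general: from Corollary~\ref{corollary:ihara_formula}, $\det(I-uB_{\alpha})$ has the factor $(1-u^{2})^{m-n}$, so whenever $m>n$ the values $\pm 1$ are eigenvalues of $B_{\alpha}$ with total multiplicity $2(m-n)$, and the associated eigenvectors necessarily map to the zero vector under the in/out projection (the $2n\times 2n$ matrix cannot accommodate the extra $2(m-n)$ eigenpairs). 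So the corollary as stated should be read with ``eigenpair'' used loosely, or restricted to those eigenvalues of $B_{\alpha}$ whose projected vector is nonzero; the paper tacitly adopts the former reading.
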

This is evident from Theorem~\ref{theorem:edge-wise_to_node-wise}.
We can efficiently obtain the transformed eigenvectors of $B_{\alpha}$ from the adjacency-related matrix
described in the right-hand side of \eqref{eq:theorem:edge-wise_to_node-wise} since the matrix is generally smaller than $B_{\alpha}$.

\section{Application in clustering of directed graphs}\label{section:application}

In this section, we demonstrate the utility of the CNBT matrix for node clustering in directed graphs,
especially when the underlying graph is sparse.
A graph is said to be sparse if the number of edges is at most a constant multiple of the number of vertices.
In the context of undirected graphs,
spectral methods based on the adjacency matrix or on the derived graph Laplacians are widely used across various fields.
However, it is known that in sparse graphs, methods such as belief propagation~\citep{Yedidia2003-nz} can perform well,
whereas the spectral methods fail to estimate clusters accurately~\citep{Decelle2011-ir}.
To resolve this problem, several approaches have been proposed, including methods based on the NBT matrix~\citep{Krzakala2013-up,Saade2014-vg,Newman2013-zc}.
Regarding the detectability condition based on the edge probabilities between and within clusters,
methods using the NBT matrix have been experimentally shown to perform similarly to belief propagation and outperform methods based on the adjacency matrix~\citep{Krzakala2013-up}.

Recently, the problem of directed graph clustering has received significant attention,
highlighting the differences in problem compared to undirected graphs.
Unlike undirected graph clustering, which focuses on dense intra-cluster and sparse inter-cluster connections,
directed graph clustering considers the imbalance of directional information between clusters.
Hermitian adjacency matrices are often used to address this problem due to its symmetry and ease of handling.
The approach has been shown to perform well in~\citep{Cucuringu2020-de,Laenen2020-ft,Martin2024-en}.

We illustrate that the similar issue of spectral methods based on the adjacency matrix failing to work for sparse undirected graphs also arises in directed graphs.
We experimentally show that a method based on a Hermitian adjacency matrix fail to resolve this issue,
whereas the proposed method can mitigate the problem.

\subsection{Our algorithm}

\begin{algorithm}[tb]
	\caption{CNBT-SC~(in/out)}
	\label{algorithm:cnbt_sc}
	\begin{algorithmic}[1]
		\Require{a digraph $G=(V,E)$, the number of clusters $K$}
		\Ensure{clusters $S_1, \dots S_K \subset V$ with $S_i \cap S_j = \phi,\; i\neq j$}
		\State $\alpha \gets \exp\left( \frac{2\pi \sqrt{-1}}{K} \right)$
		\State Compute the CNBT matrix $B_{\alpha}$
		\State Compute the eigenvectors $g_1,\dots, g_{\lfloor \frac{K}{2} \rfloor} \in \mathbb{C}^{2m}$ corresponding to the eigenvalues with the largest real part of $B_{\alpha}$
		\State Compute $(g_1)_{\alpha}^{\text{out}}, \dots, (g_{\lfloor \frac{K}{2} \rfloor})_{\alpha}^{\text{out}} \in \mathbb{C}^{n}$ (or compute in-vectors)
		\State $X \gets \begin{pmatrix} (g_1)_{\alpha}^{\text{out}} & \dots & (g_{\lfloor \frac{K}{2} \rfloor})_{\alpha}^{\text{out}} \end{pmatrix} \in \mathbb{C}^{n \times \lfloor \frac{K}{2} \rfloor}$ (or apply to in-vectors)
		\State Normalize each row of $X$ and denote the result as $\tilde{X}$
		\State Use K-means algorithm to $\begin{pmatrix} \text{real}(\tilde{X}) & \text{imag}(\tilde{X}) \end{pmatrix}$
	\end{algorithmic}
\end{algorithm}

We propose a new spectral algorithm~(\textit{CNBT-SC}, Algorithm~\ref{algorithm:cnbt_sc}) using the CNBT matrix.
We set $\alpha=\exp\left( \frac{2\pi \sqrt{-1}}{K} \right)$ based on experimental trials and the prior research~\citep{Laenen2020-ft}.
Furthermore, we utilize $\lfloor \frac{K}{2} \rfloor$ eigenvectors, treating the real and imaginary components separately to obtain a final dimensionality of $K$.
The codes implementing our algorithm and experiments will be made available in a public repository upon acceptance.

\subsection{Experimental setup}

We present two experiments on directed graphs generated from the stochastic block models.
The first experiment compares the proposed algorithm with existing methods under the same experimental conditions~\citep{Cucuringu2020-de}.
The directed stochastic block model~(DSBM, \citep{Cucuringu2020-de}) is a generative model for directed graphs with cluster structure, parameterized by $K, n, p, \eta$ and $F \in \mathbb{R}^{K\times K}$. A generated graph consists of $n=5000$ vertices divided into $K=5$ clusters of the same size. Each pair of vertices $(u,v)$ is connected by a directed edge with probability $p$. The direction of the edge is determined as follows: given that vertices $u$ and $v$ belong to cluster $a$ and $b$, respectively, the edge is oriented from $u$ to $v$ with probability $F_{a,b}$. The matrix $F$ satisfies the constraint $F_{a,b} + F_{b,a} = 1$. In this experiment, we assume a circular pattern between clusters and define the elements of $F$ as follows:
\begin{align*}
	F_{a,b} \coloneqq \begin{cases}
		                  1 - \eta & a+1 \equiv b \text{ mod } K
		                  ,                                      \\
		                  \eta     & b+1 \equiv a \text{ mod } K
		                  ,                                      \\
		                  0.5      & \text{otherwise}
		                  .
	                  \end{cases}
\end{align*}
The parameter $\eta$ controls the strength of the circular pattern. Smaller values of $\eta$ enhance the circular pattern, whereas values approaching $0.5$ gradually diminish the cluster structure.

The second experiment investigates the influence of two parameters, $\epsilon$ and $\eta$ under sparse graphs.
The parameter $\epsilon$ controls the strength of the circular pattern, and $\eta$ governs the ratio of intra-cluster edge probability to inter-cluster edge probability. In addition to the DSBM used in the first experiment, we also use the degree-corrected stochastic block model~(DCSBM, \citep{Karrer2011-ic}), which tends to generate a few number of hub vertices.
A generated graph consists of $n=3000$ vertices divided into $K=3$ clusters of the same size.
Given that vertices $u$ and $v$ belong to cluster $a$ and $b$, respectively, a directed edge from $u$ to $v$ is generated with probability $\frac{\Gamma_{a,b}}{n}$ on DSBM.
We note that the inclusion of $n$ in the denominator of the edge probability is the primary factor in generating sparse graphs.
To control a circular pattern, we set $\Gamma_{a,b}$ as follows:
\begin{align*}
	\Gamma_{a,b} \coloneqq \begin{cases}
		                       \Gamma_{\text{correct}} & a+1 \equiv b \text{ mod } K
		                       ,                                                     \\
		                       \Gamma_{\text{reverse}} & b+1 \equiv a \text{ mod } K
		                       ,                                                     \\
		                       \Gamma_{\text{intra}}   & \text{otherwise}
		                       .
	                       \end{cases}
	\quad
	\epsilon \coloneqq \frac{\Gamma_{\text{correct}}}{\Gamma_{\text{reverse}}}
	,\quad
	\eta \coloneqq \frac{\Gamma_{\text{reverse}}}{\Gamma_{\text{intra}}}
	.
\end{align*}
We set $\Gamma_{\text{correct}},\; \Gamma_{\text{reverse}}$ and $\Gamma_{\text{intra}}$ such that the expected degree $c$ of the generated graph is $5$.
Given $\epsilon$ and $\eta$, these three parameters are computed as
\begin{align*}
	\Gamma_{\text{correct}} = \frac{2c \epsilon \eta}{1 + \eta (1 + \epsilon)}
	,\quad
	\Gamma_{\text{reverse}} = \frac{2c \eta}{1 + \eta (1 + \epsilon)}
	,\quad
	\Gamma_{\text{intra}} = \frac{2c}{1 + \eta (1 + \epsilon)}
	.
\end{align*}
Under the DCSBM model, we assume that a directed edge from $u$ to $v$ is generated with probability $\frac{\theta_u \theta_v \Gamma_{a,b}}{n}$.
The parameter $\theta_u \in \mathbb{R}$ allows for heterogeneous expected degrees across vertices.
We assume $\theta_u$ follows a Pareto distribution with scale $1$ and exponent $-2.5$,
and $\Gamma$ is derived using the same formula as in the DSBM.

We compare the performance of our algorithm with the other spectral clustering algorithms, a variant of DI-SIM~\citep{Rohe2016-io}, BI-SYM and its normalization version~(DD-SYM)~\citep{Satuluri2011-ja}, Herm~\citep{Cucuringu2020-de}, and SimpleHerm~\citep{Laenen2020-ft}.
We evaluate the estimation accuracy using the Adjusted Rand Index~(ARI, \citep{Rand1971-iq,Hubert1985-bg}) which measures the similarity between two clusters.
Thus, the higher the value, the more similar the two clusters are.

\subsection{Experimental results}

\begin{figure}[tb]
	\begin{minipage}{.23\textwidth}
		\centering
		\includegraphics[width=1\textwidth]{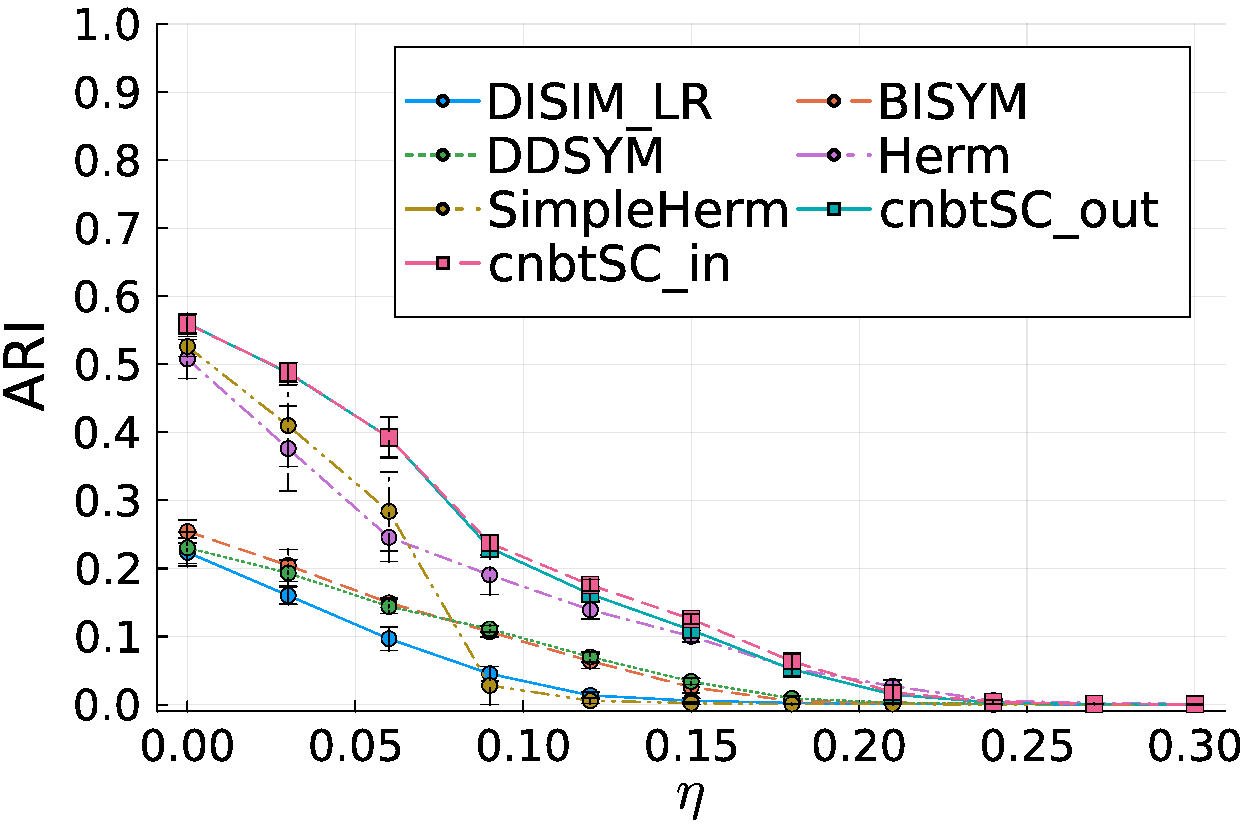}
		\subcaption{$p=0.45\%$}
	\end{minipage}
	\hfill
	\begin{minipage}{.23\textwidth}
		\centering
		\includegraphics[width=1\textwidth]{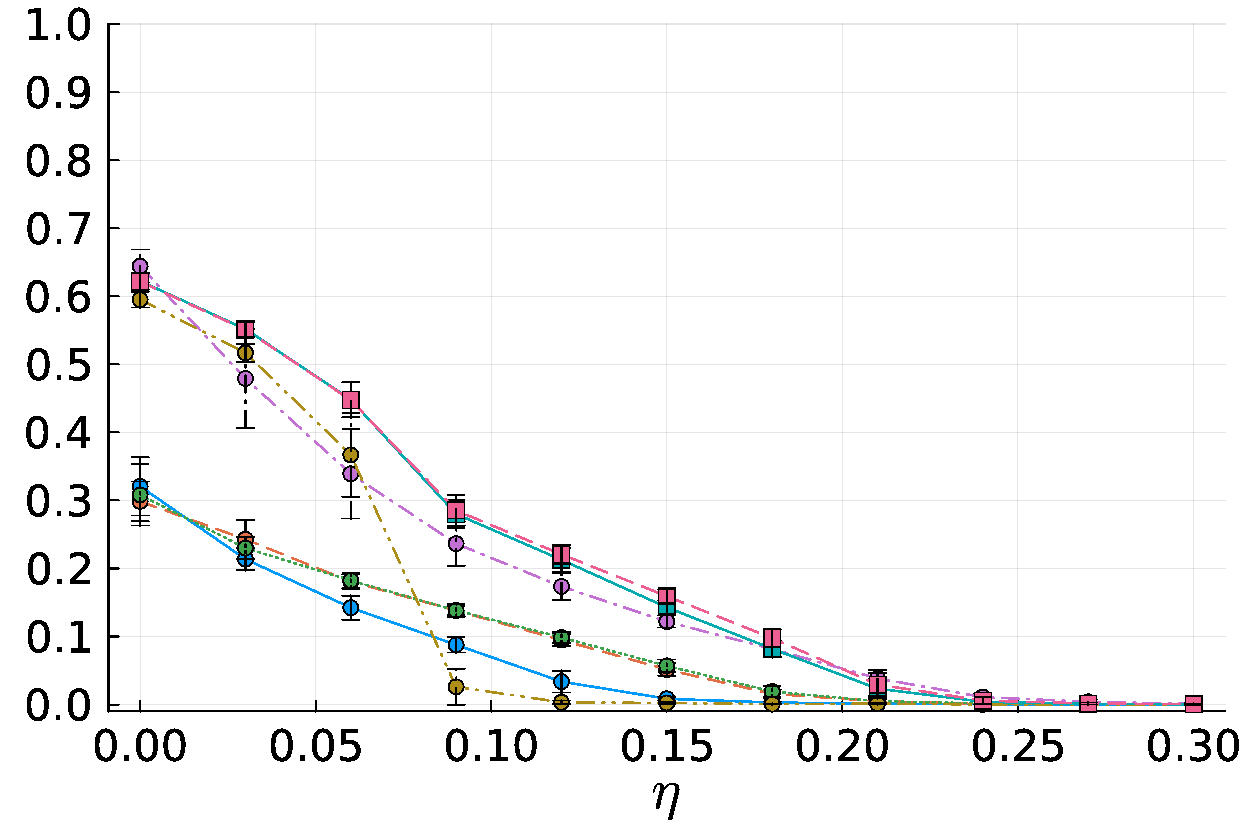}
		\subcaption{$p=0.5\%$}
	\end{minipage}
	\hfill
	\begin{minipage}{.23\textwidth}
		\centering
		\includegraphics[width=1\textwidth]{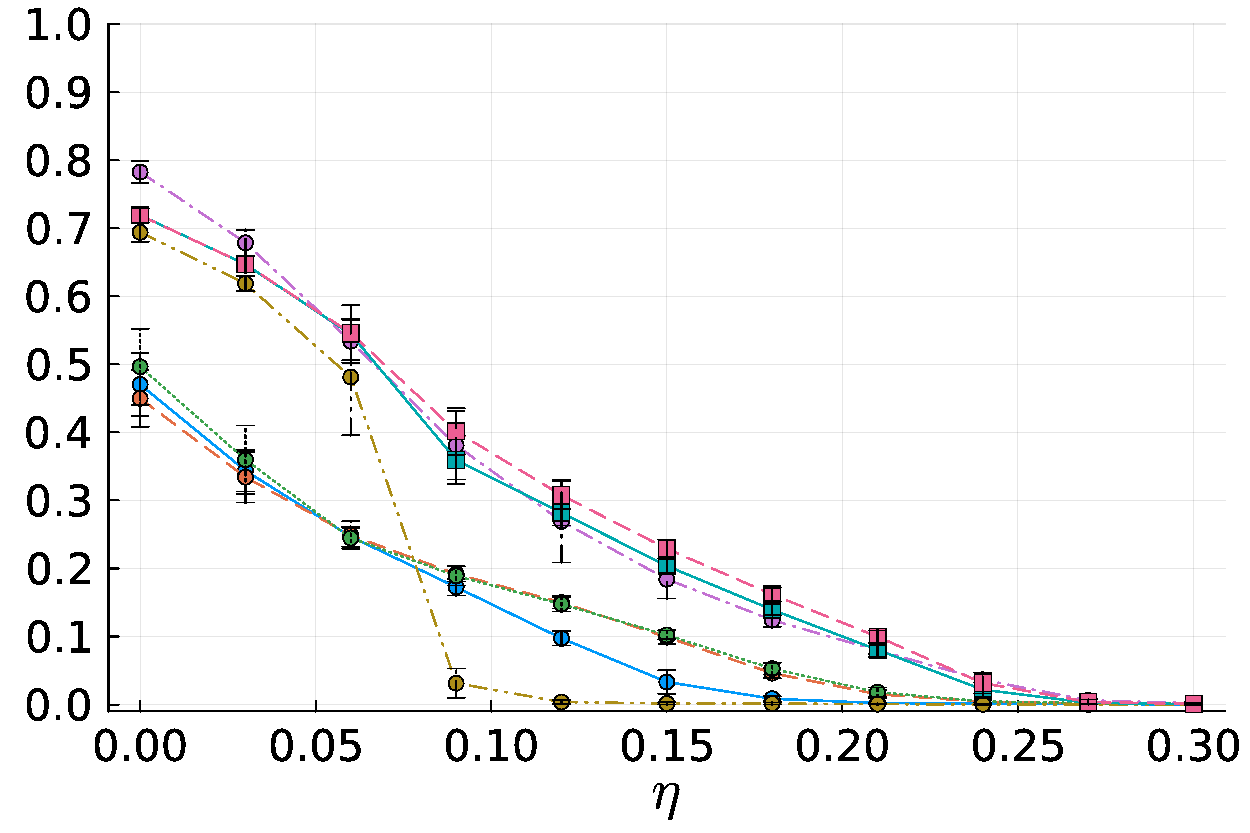}
		\subcaption{$p=0.6\%$}
	\end{minipage}
	\hfill
	\begin{minipage}{.23\textwidth}
		\centering
		\includegraphics[width=1\textwidth]{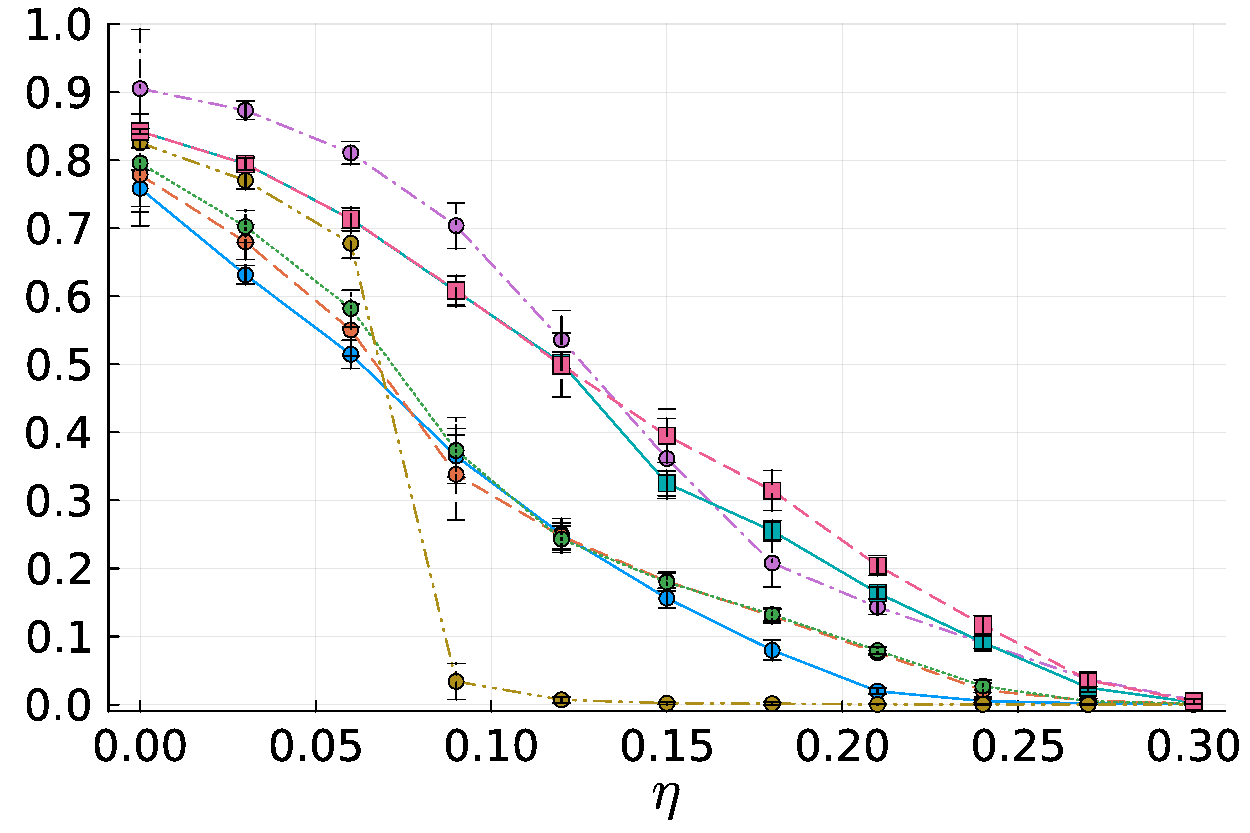}
		\subcaption{$p=0.8\%$}
	\end{minipage}
	\caption{Recovery rates of the first experiment. The results show the average ARI over $10$ independent simulations.}
	\label{figure:1st_experiment}
\end{figure}

\begin{figure}[tb]
	\begin{minipage}{.45\textwidth}
		\centering
		\includegraphics[width=1\textwidth]{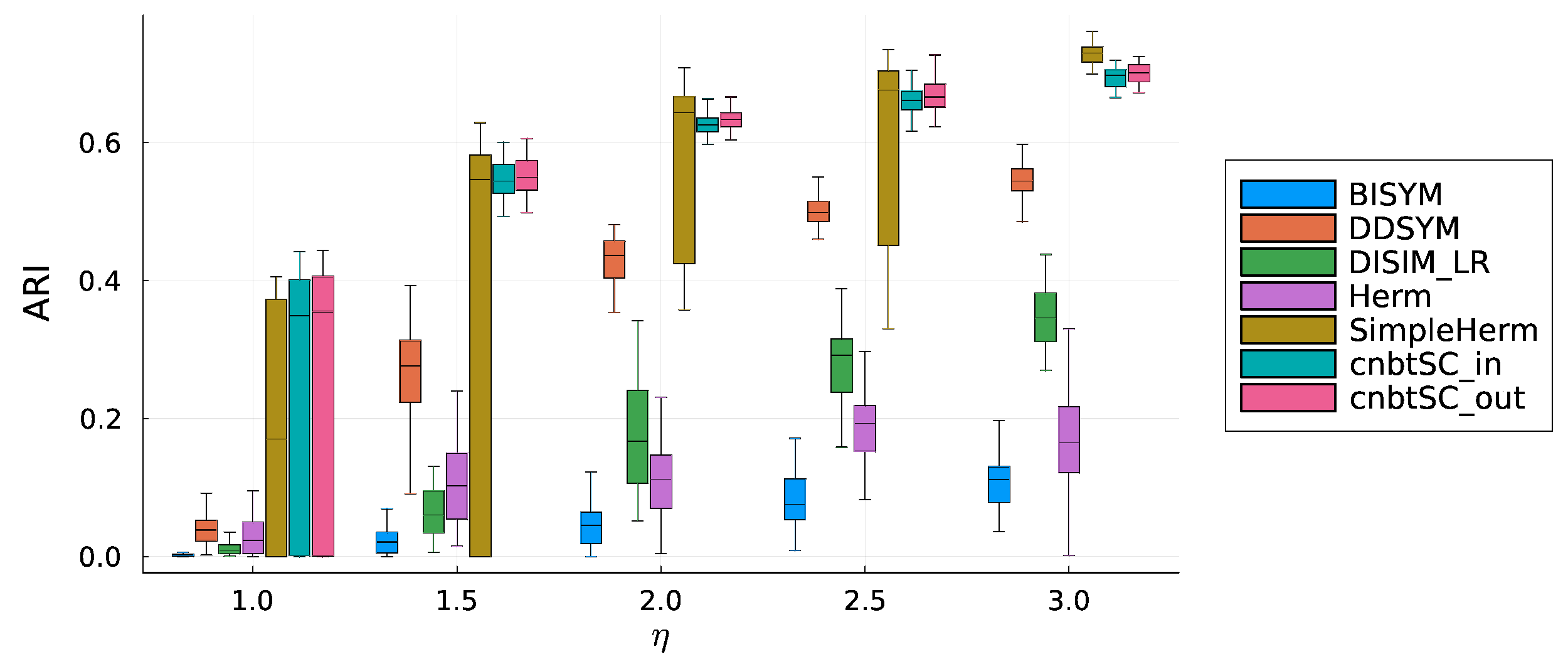}
		\subcaption{DSBM with $\epsilon = 4$}
		\label{figure:dsbm_eps}
	\end{minipage}
	\hfill
	\begin{minipage}{.45\textwidth}
		\centering
		\includegraphics[width=1\textwidth]{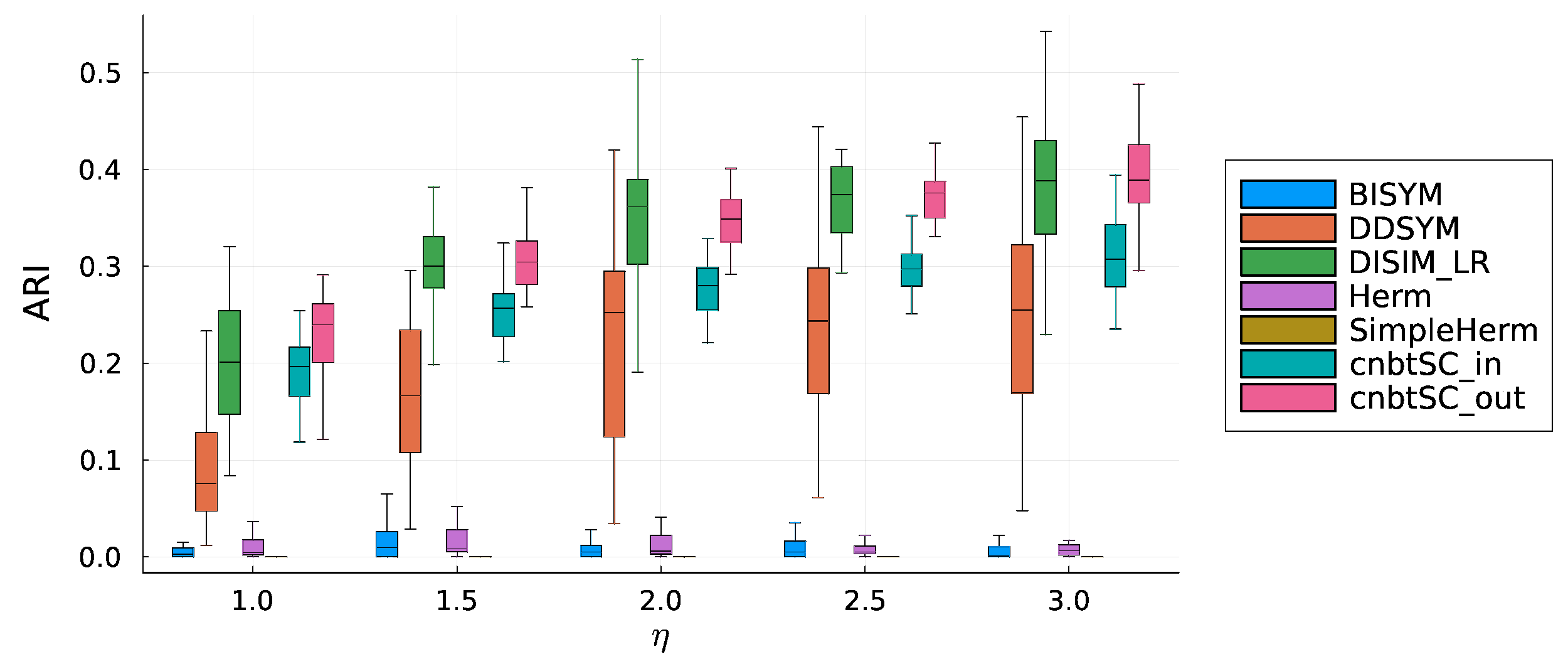}
		\subcaption{DCSBM with $\epsilon = 4$}
		\label{figure:dcsbm_eps}
	\end{minipage}
	\begin{minipage}{1\textwidth}
		\begin{minipage}{.3\textwidth}
			\centering
			\includegraphics[width=1\textwidth]{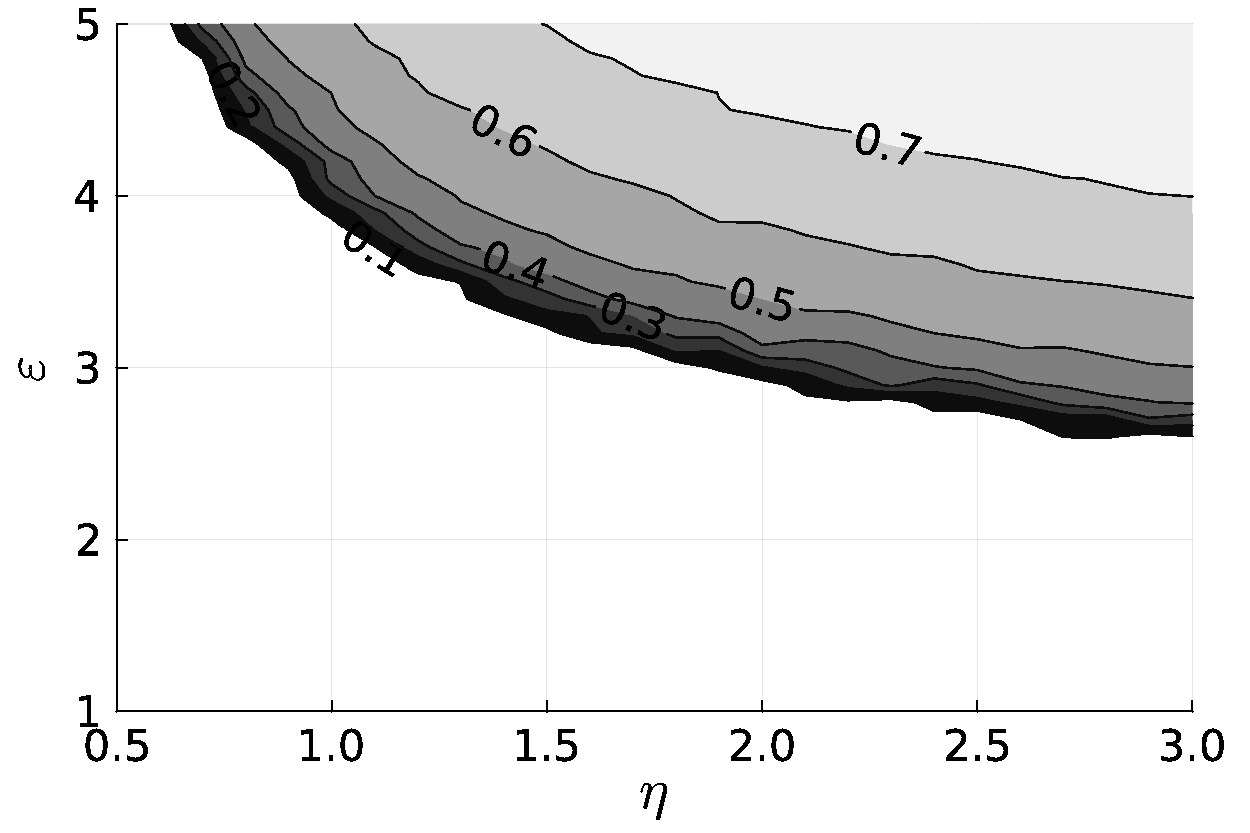}
		\end{minipage}
		\hfill
		\begin{minipage}{.3\textwidth}
			\centering
			\includegraphics[width=1\textwidth]{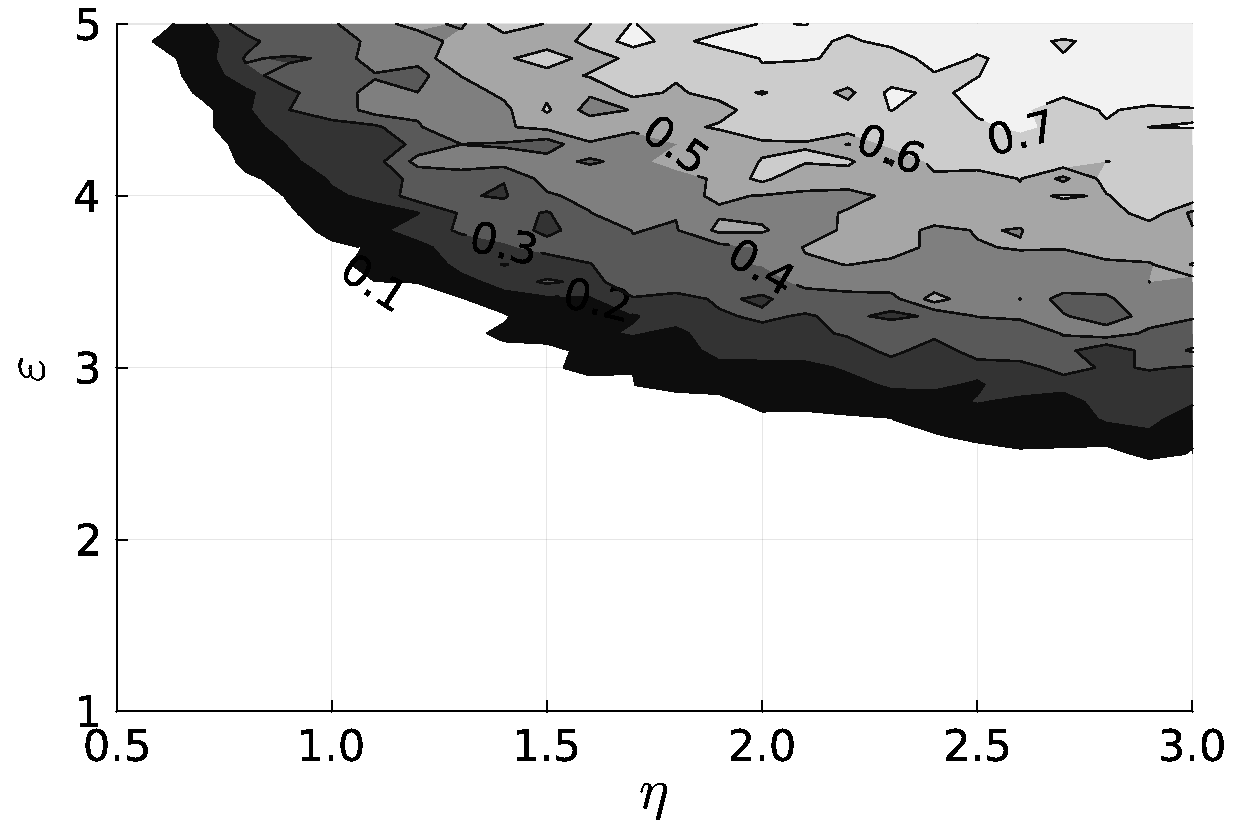}
		\end{minipage}
		\hfill
		\begin{minipage}{.3\textwidth}
			\centering
			\includegraphics[width=1\textwidth]{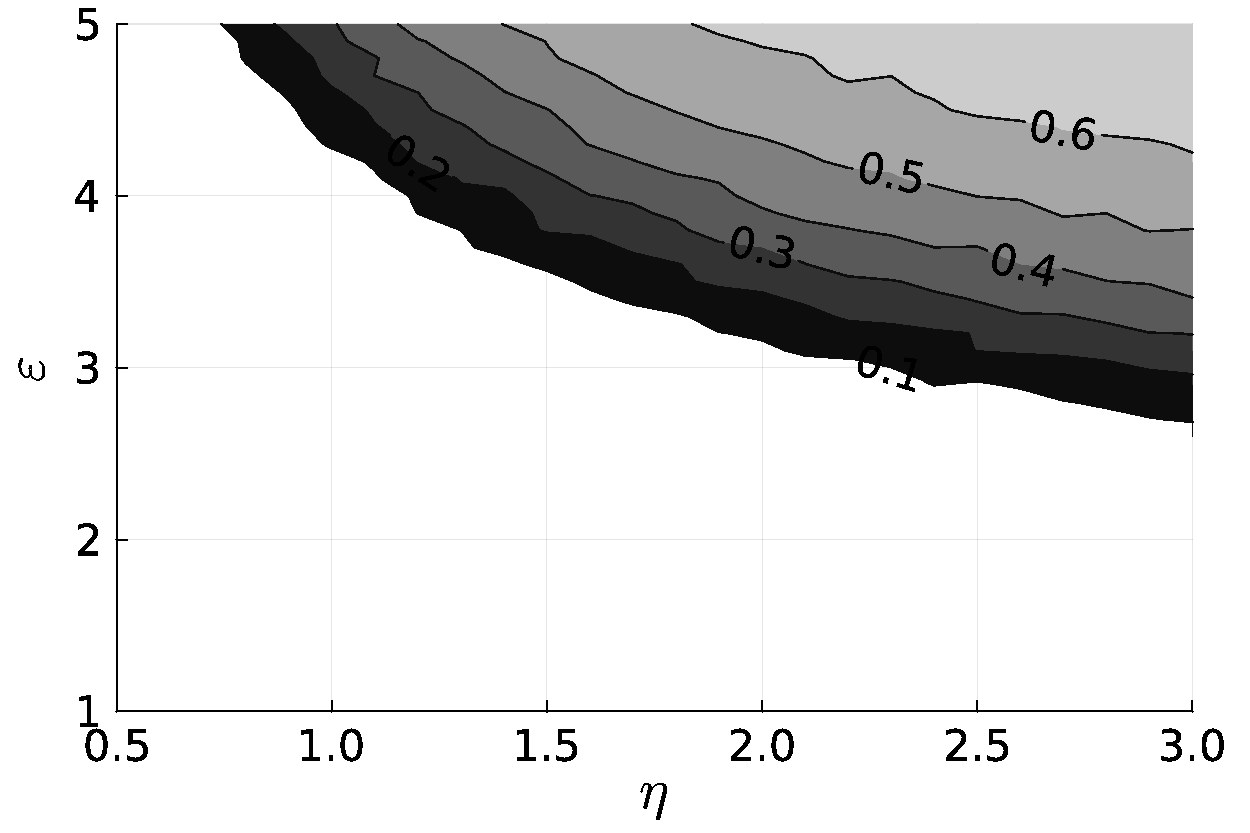}
		\end{minipage}
		\subcaption{Contour plots over $\epsilon$ and $\eta$ under the DSBM, showing the top three methods from Figure~\ref{figure:dsbm_eps} (from left to right: CNBT-SC~(out), SimpleHerm, DD-SYM)}
	\end{minipage}
	\begin{minipage}{1\textwidth}
		\begin{minipage}{.3\textwidth}
			\centering
			\includegraphics[width=1\textwidth]{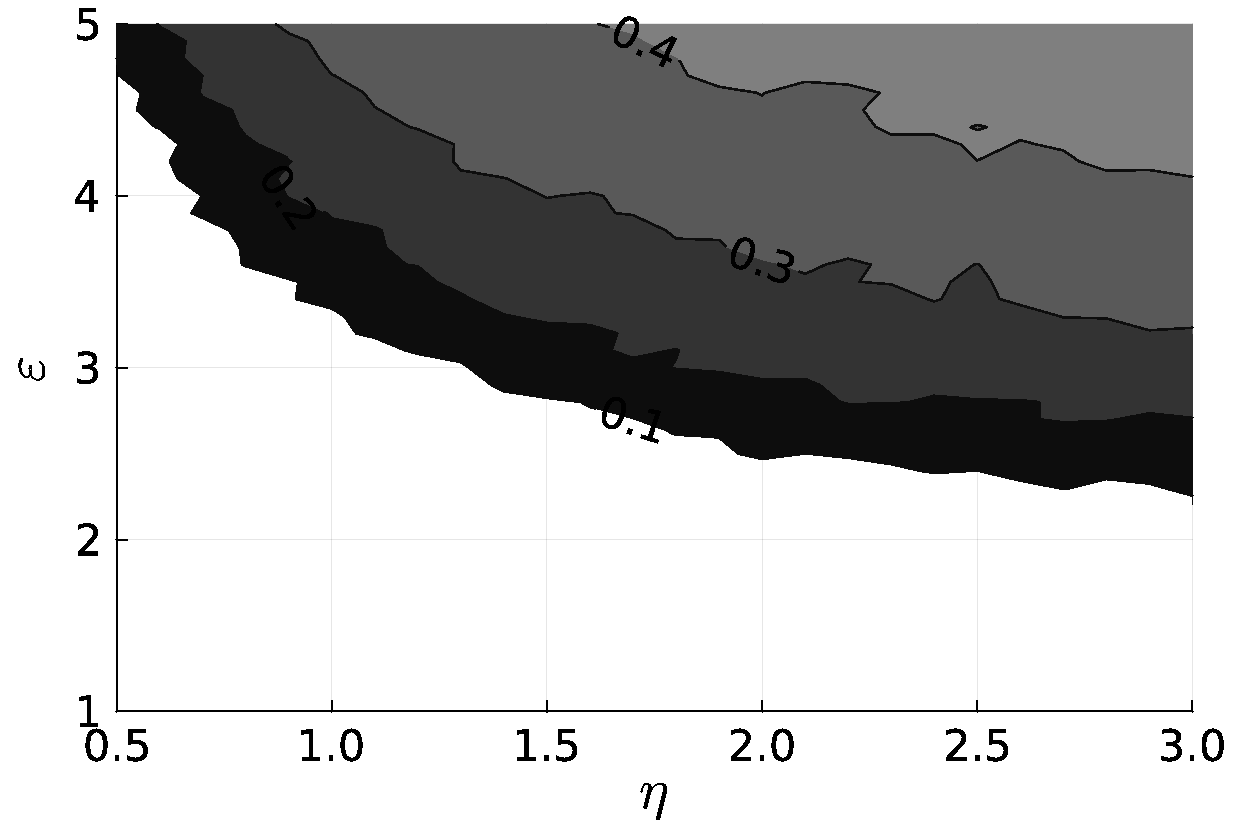}
		\end{minipage}
		\hfill
		\begin{minipage}{.3\textwidth}
			\centering
			\includegraphics[width=1\textwidth]{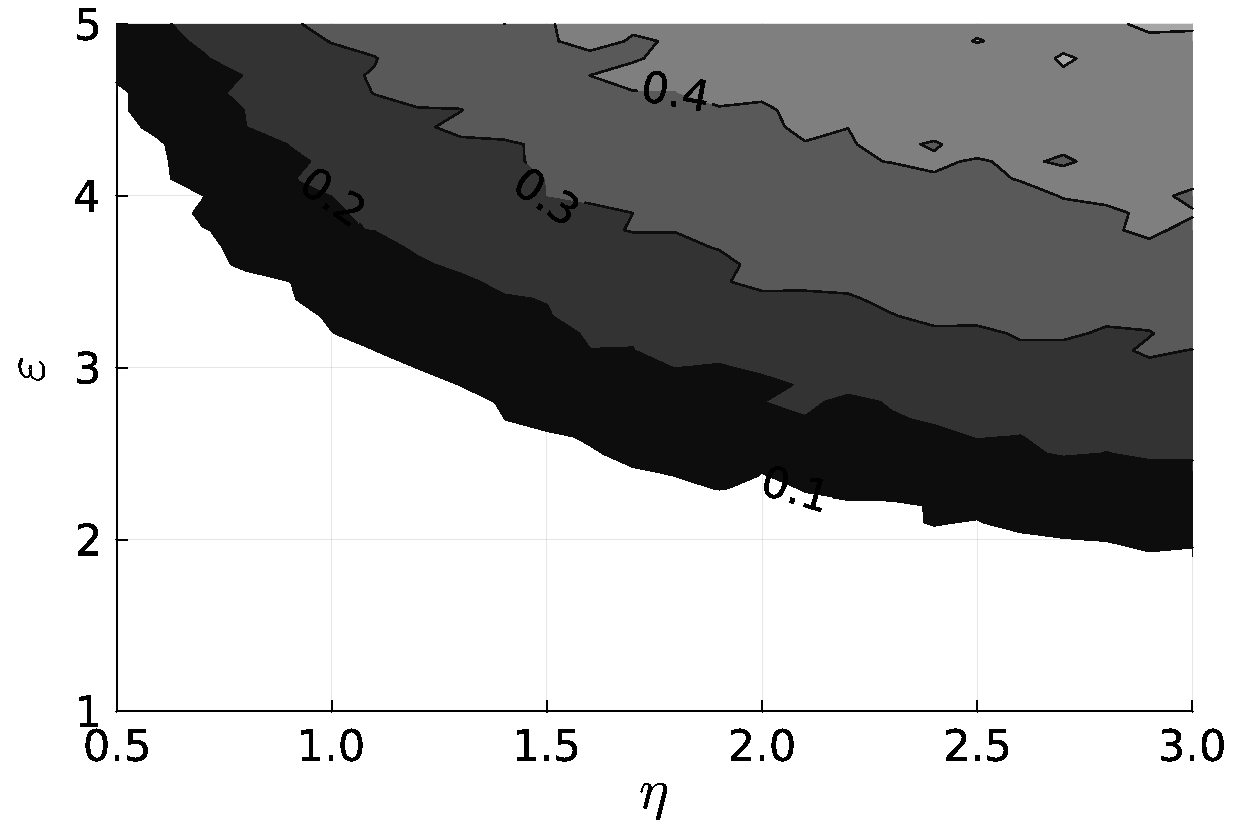}
		\end{minipage}
		\hfill
		\begin{minipage}{.3\textwidth}
			\centering
			\includegraphics[width=1\textwidth]{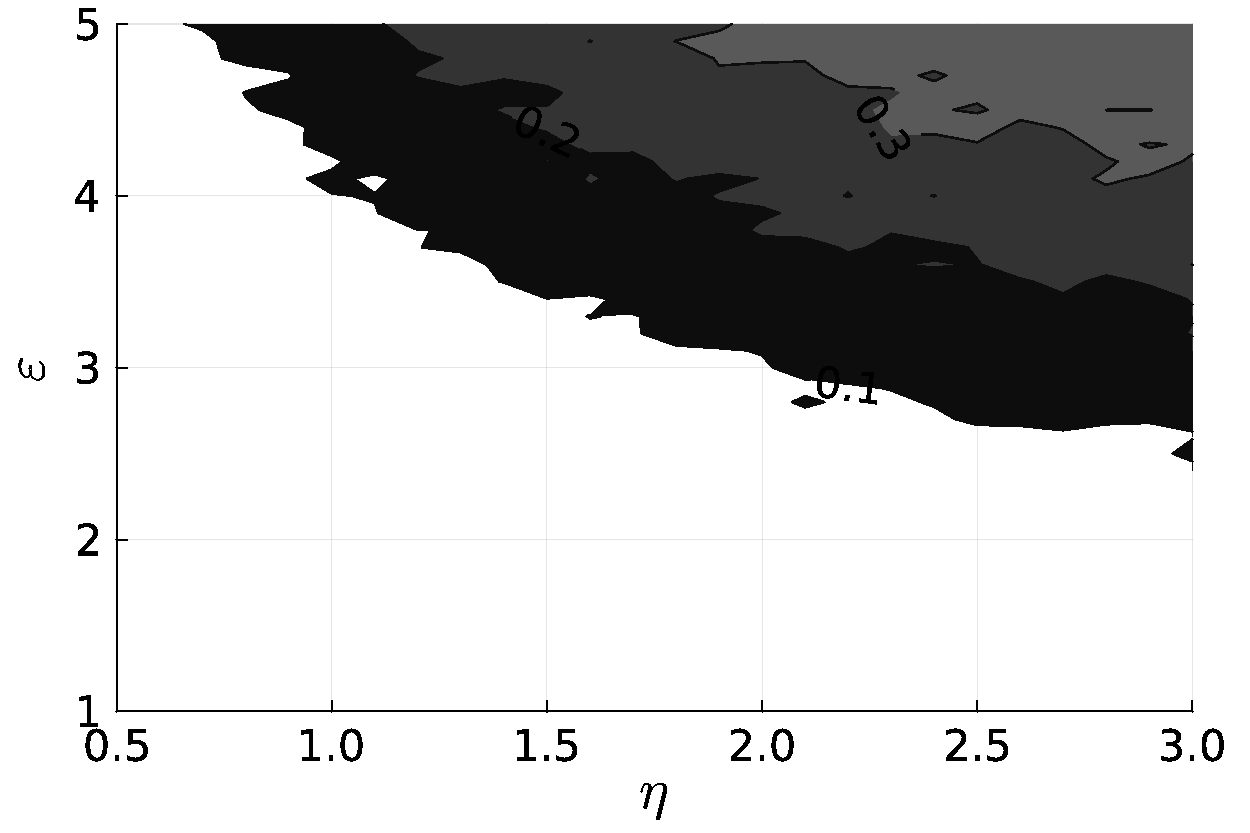}
		\end{minipage}
		\subcaption{Contour plots over $\epsilon$ and $\eta$ under the DCSBM, showing the top three methods from Figure~\ref{figure:dcsbm_eps} (from left to right: CNBT-SC~(out), DI-SIM, DD-SYM)}
	\end{minipage}
	\caption{Comparison of the second experiment. The top row shows boxplots with $\epsilon=4$. Contour lines represent the average ARI over $30$ independent trials.}
	\label{figure:2nd_experiment}
\end{figure}

Figure~\ref{figure:1st_experiment} shows the result of the first experiment. While SimpleHerm exhibits good performance when $\eta$ is small, it is observed to be the most sensitive to the increase of $\eta$, with its performance degrading more rapidly than other methods.
The proposed algorithm achieves competitive performance in all scenarios, and it shows the most superior results especially when $p$ is small.

The results of the second experiment are shown in Figure~\ref{figure:2nd_experiment}.
For most methods, we observe a general trend of improved performance as the correct circular pattern becomes more pronounced~(i.e., with increasing $\epsilon$), and as the ratio of inter-cluster edges grows~(i.e., which increasing $\eta$).
It is noteworthy that the proposed method consistently achieves good results on both DSBM and DCSBM,
while the performance of other methods deteriorates on either one of these models.
The stable performance of DD-SYM across both models likely stems from its normalization process,
which was specifically designed to handle the presence of hub vertices.
Additionally, this result shows that SimpleHerm exhibits high variance in its performance under DSBM with sparse graphs.
Further investigation of individual trials reveals that occasionally, the eigenvector used by SimpleHerm failed to identify any cluster structure, which consequently led to an ARI value of approximately zero.

\subsection{Relationship with Belief Propagation}

It has already been demonstrated in \citep{Krzakala2013-up,Decelle2011-ir} that the NBT matrix naturally emerges from the linearization of the belief propagation~(BP)~\citep{Andersen1991-go,Mezard2009-ku} update equations. We investigate the idea to elucidate the relationship between the CNBT matrix and clustering.
A detailed derivation is described in appendix~\ref{appendix:derivation_delta}.

Let us consider the stochastic block models~(SBM) with $K$ clusters. Let $\psi_u$ denotes the prior distribution indicating which cluster a vertex $u$ belongs to, $t_u$ be a random variable representing the cluster of the vertex $u$, and $P_{ab}$ be the probability of generating a directed edge from a vertex in cluster $a$ to a vertex in cluster $b$. Then, the joint probability of $\left\{ t_u \right\}_{u \in V}$ and $A$ can be described as follows:
\begin{align*}
	f(\left\{ t_u \right\}_{u \in V}, A) \propto \left( \prod_{u \in V} \psi_u(t_u) \right) \left( \prod_{u\neq v} P_{t_u t_v}^{A_{uv}} \left(1 - P_{t_u t_v}\right)^{1-A_{uv}} \right)
	.
\end{align*}
Let a network be sparse by defining the edge probability as $P_{ab} = \frac{c_{ab}}{N}$,
where $c_{ab}$ is a constant and $N$ is the number of vertices.
Then, as described in \citep{Decelle2011-ir},
the update equation for the message $\nu_{u\to v}$ from vertex $u$ to its neighboring vertex $v$ in loopy BP can be approximated as
\begin{align*}
	\nu_{u\to v}(a) \propto \psi_{u}(a) e^{-h(a)} \left\{ \prod_{w \in N_u \setminus \{v\}} \sum_{b} c_{ab} \nu_{w\to u}(b) \right\}
	,
\end{align*}
where $h(a) \coloneqq \frac{1}{N}\sum_{w \in V} \sum_{b} c_{ab} \nu_{w}(b)$ aggregates the influence from non-adjacent vertices, and $\nu_{w}(b)$ denotes the marginal probability that a vertex $w$ belongs to cluster $b$.

In order to clarify the relationship between the CNBT matrix and BP, we introduce the following assumptions: (i) each vertex shares the same prior and that is uniform, $\psi_u(a) = n_a = \frac{1}{K}$, and (ii) the vertices in each cluster have approximately the same degree, $c \coloneqq \sum_{b} c_{ab}$. Defining $\nu_{u\to v}(a) = n_a + \delta_{u\to v}(a)$ and linearizing around $n_a$ leads
\begin{align*}
	\delta_{u\to v}(a) = \sum_{w\in N_u \setminus \{ v \}} \sum_{b} \frac{c_{ab}}{c} \delta_{w\to u}(b)
	.
\end{align*}
Let $\frac{c_{ab}}{c}$ be the $(a,b)$ component of a matrix $T$. A vector $\bm{\delta} \in \mathbb{C}^{2mK}$ and a function $\text{mat}:\mathbb{C}^{2mK} \to \mathbb{C}^{2m\times K}$ are defined as follows:
\begin{align*}
	\bm{\delta}             & \coloneqq \begin{pmatrix} \delta_{1}(1) & \delta_{2}(1) & \dots & \delta_{2m-1}(K) & \delta_{2m}(K) \end{pmatrix}
	,                                                                                                                                           \\
	\text{mat}(\bm{\delta}) & \coloneqq
	\begin{pmatrix}
		\delta_{1}(1)  & \dots  & \delta_{1}(K)  \\
		\vdots         & \ddots &                \\
		\delta_{2m}(1) &        & \delta_{2m}(K)
	\end{pmatrix}
	= \begin{pmatrix}
		  \bm{\delta}_{1} & \dots & \bm{\delta}_{2m}
	  \end{pmatrix}^{\top}
	.
\end{align*}
This leads to
\begin{align}
	\bm{\delta} = (T \otimes B) \bm{\delta}
	.\label{eq:delta_equation}
\end{align}
We note that Eq.~\eqref{eq:delta_equation} is equivalent to $\text{mat}(\bm{\delta}) = B \text{mat}(\bm{\delta}) T^{\top}$.

Here, we consider the following two types of matrices $T_1, T_2 \in \mathbb{R}^{K\times K}$ with $e \in (0.5,1), f, g \in \mathbb{R}_{+}, f > g$:
\begin{align*}
	(T_1)_{ab} & \coloneqq
	\begin{cases}
		e   & a+1 \equiv b \text{ mod } K
		\\
		1-e & b+1 \equiv a \text{ mod } K
		\\
		0.5 & \text{otherwise}
		,
	\end{cases}
	\quad      &
	(T_2)_{ab} & \coloneqq
	\begin{cases}
		f & a+1 \equiv b \text{ mod } K
		\\
		g & \text{otherwise}.
	\end{cases}
\end{align*}
These matrices model a scenario where directed edges are more likely to occur in a cyclic orientation, such as from cluster $1$ to $2$, $2$ to $3$, and so on.
The eigenvalues of $T_1, T_2$ satisfy the following:
\begin{lemma}\label{lemma:spectral_matrix_T}
	The matrix $T_1$ has a single real eigenvalue and all remaining eigenvalues are purely imaginary. Similarly, the matrix $T_2$ has a single real eigenvalue, with the others denoted as $(f-g) \exp \left( \frac{2\pi \sqrt{-1}}{K} k \right),\; k=1,\dots, K-1$.
\end{lemma}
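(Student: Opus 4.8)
The plan is to observe that both $T_1$ and $T_2$ are \emph{circulant} matrices and to diagonalize them in the discrete Fourier basis. Let $P \in \mathbb{R}^{K\times K}$ be the cyclic shift permutation matrix, $(P)_{ab}=1$ iff $b \equiv a+1 \pmod K$, so that $P^{-1}=P^{\top}$ satisfies $(P^{-1})_{ab}=1$ iff $b\equiv a-1 \pmod K$, and let $J$ be the all-ones matrix. First I would rewrite the two matrices as
\begin{align*}
	T_1 &= \tfrac12 J + \left(e-\tfrac12\right)\!\left(P - P^{-1}\right), &
	T_2 &= gJ + (f-g)P,
\end{align*}
which is checked entrywise: off the two shifted diagonals the $P,P^{-1}$ contributions to $T_1$ cancel and leave $0.5$; on the $+1$ diagonal one gets $0.5+(e-0.5)=e$; on the $-1$ diagonal $0.5-(e-0.5)=1-e$; and similarly for $T_2$. (I assume $K\ge 3$ so that the main diagonal and the two shifted diagonals are distinct; the cases $K\le 2$ are immediate by inspection.)

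Next, set $\omega \coloneqq \exp\!\left(\frac{2\pi\sqrt{-1}}{K}\right)$ and, for $k=0,\dots,K-1$, let $v_k \coloneqq \left(1,\omega^{k},\omega^{2k},\dots,\omega^{(K-1)k}\right)^{\top}$; these form a basis of $\mathbb{C}^{K}$. A direct computation gives $P v_k = \omega^{k} v_k$, $P^{-1} v_k = \omega^{-k} v_k$, and $J v_k = 0$ for $k\neq 0$ while $J v_0 = K v_0$. Substituting into the decompositions above yields the whole spectrum. For $T_2$ the eigenvalue on $v_0$ is $f+(K-1)g$, which is real, and on $v_k$ with $k\neq 0$ it is $(f-g)\omega^{k} = (f-g)\exp\!\left(\frac{2\pi\sqrt{-1}}{K}k\right)$. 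For $T_1$ the eigenvalue on $v_0$ is $\tfrac{K}{2}$, which is real, and on $v_k$ with $k\neq 0$ it is $\left(e-\tfrac12\right)\!\left(\omega^{k}-\omega^{-k}\right) = 2\sqrt{-1}\left(e-\tfrac12\right)\sin\!\frac{2\pi k}{K}$, which is purely imaginary since $e-\tfrac12 \in \mathbb{R}$. This is precisely the asserted form.

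This argument is routine, so I do not expect a real obstacle; the only delicate point is the exceptional index $k=K/2$ when $K$ is even, where $\omega^{k}-\omega^{-k}=0$, so that the corresponding eigenvalue of $T_1$ vanishes (and that of $T_2$ equals $-(f-g)$). Since a zero eigenvalue is ``purely imaginary'' in the degenerate sense $\mathrm{Re}=0$, the statement as phrased remains consistent; and in both cases the real eigenvalue exhibited on $v_0$ is strictly the one of largest real part, because $\mathrm{Re}\,(f-g)\omega^{k} \le f-g < f+(K-1)g$ and $0 < \tfrac{K}{2}$. I would add a one-line remark to this effect (or read the uniqueness claim for $K$ odd) rather than complicate the statement.
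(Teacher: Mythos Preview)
Your argument is correct and essentially identical to the paper's own proof: both recognize $T_1,T_2$ as circulant matrices and diagonalize them in the discrete Fourier basis, obtaining $\lambda_0=\tfrac{K}{2}$, $\lambda_k=\sqrt{-1}(2e-1)\sin\!\frac{2\pi k}{K}$ and $\mu_0=f+(K-1)g$, $\mu_k=(f-g)\exp\!\bigl(\tfrac{2\pi\sqrt{-1}}{K}k\bigr)$. The only cosmetic difference is that you write the circulants explicitly as polynomials in the shift $P$ and the all-ones matrix $J$, whereas the paper invokes the generic circulant eigenvalue formula $\sum_l p_l\,\omega^{kl}$; your remark on the degenerate $k=K/2$ case for even $K$ is a useful clarification that the paper omits.
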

Suppose that we specifically choose $\bm{\delta}_i$ to be one of the eigenvectors of $T$, with $\mu_i \in \mathbb{C}$ as the corresponding eigenvalue, then
\begin{align*}
	B \text{mat}(\bm{\delta}) T^{\top} = B \; \text{diag}\begin{pmatrix} \mu_1 & \dots & \mu_{2m} \end{pmatrix} \text{mat}(\bm{\delta})
	.
\end{align*}
In Eq.~\eqref{eq:delta_equation}, if we assume that the matrix $T$ is either $T_1$ or $T_2$,
their eigenvalues include real multiples of the imaginary unit $\sqrt{-1}$ or $\exp(\frac{2\pi \sqrt{-1}}{K} k), k=1,\dots, K-1$.
Consequently, depending on the choice of $\bm{\delta}_i$ and model parameters, $B\; \text{diag}(\mu_1\; \dots\; \mu_{2m})$ and $B_{\alpha} = B\Lambda$ exhibit a structural similarity. This leads a connection between BP and the eigenvalue problem of the CNBT matrix.

\subsubsection{Discussion}\label{section:discuss}

The analysis in the above section shows that a close relationship between the spectral properties of the CNBT matrix and BP on directed graphs.
However, this relationship has not been fully elucidated, and several aspects still require further investigation.

First, based on the above derivation, the exchanged messages in BP may become complex numbers. When considering directed graphs, the matrix $T$ is asymmetric, leading to complex eigenvectors. Consequently, $\bm{\delta}$ also becomes a complex vector. The interpretation of BP with such complex-valued messages requires further consideration.

Second, It remains unclear what conditions eigenvectors must satisfy to serve as a stable solution in BP.
We have only established that one possible form of the equations involves the CNBT matrix.
However, whether this actually corresponds to a stable BP solution must be investigated further.

Finally, an important concern is the extent to which the assumptions introduced in the previous section (namely, a uniform prior shared by all vertices and the degree homogeneity) can be generalized or relaxed.
Addressing these open questions is critical for advancing our understanding of the properties of complex-valued matrix representations.

\section{Conclusion}\label{section:conclusion}

We introduced the complex non-backtracking matrix corresponding to the Hermitian adjacency matrix for directed graphs.
We showed that our definition extends the Ihara's formula to the Hermitian matrices
and revealed analogous results for relationships involving in/out-vectors and properties of mixed walk counts, similar to the case of undirected graphs.
Moreover, we experimentally demonstrated that the eigenvectors of the proposed matrix representation are useful for clustering in directed graphs, similar to the case in sparse undirected graphs. We also discussed the connection to belief propagation.

To further explore our findings, several issues need to be addressed.
Firstly, as described in~\ref{section:discuss},
the relationship between the complex non-backtracking matrix and belief propagation has not yet been fully elucidated.
Addressing issues such as the treatment of complex-valued message in belief propagation,
its convergence criteria, and the analysis of cluster estimation thresholds via the study of phase transitions is expected to lead to the development of algorithms with improved performance.

Additionally, there is clear challenge in normalization for clustering algorithm.
The proposed algorithm employs standard normalization for fair comparison with other methods.
However, it has been observed that the data points of vertices calculated from the eigenvectors of the complex non-backtracking matrix, tend to concentrate around the origin, particularly for vertices that are more difficult to estimate a cluster.
This tendency has also been pointed out in~\citep{Newman2013-zc}, where a specific normalization unique to the non-backtracking matrix is proposed.
Investigating the interpretation of this specific normalization in belief propagation could offer a valuable direction for future research.

Furthermore, while it is empirically known that
the eigenvalue corresponding to the eigenvector with cluster structure has the largest real part
and an almost zero imaginary part, the underlying connection remains unclear.
Exploring this relationship could potentially lead to the development of metrics for quantifying the strength and stability of cluster structures.

\section*{Acknowledgment}
This work was supported by JSPS KAKENHI Grant Nos. JP22H03653 and 23H04483.

\appendix

\section{An example of each notation}\label{appendix:notations}

\begin{figure}[tb]
	\centering
	\begin{tikzpicture}
		\Vertex[label=$v_1$, x=0.0, y=1.5, style={fill=none}]{v1}
		\Vertex[label=$v_2$, x=1.5, y=1.5, style={fill=none}]{v2}
		\Vertex[label=$v_3$, x=0.0, y=0.0, style={fill=none}]{v3}
		\Vertex[label=$v_4$, x=1.5, y=0.0, style={fill=none}]{v4}

		\Edge[Direct, bend=10](v1)(v2)
		\Edge[Direct, bend=10](v2)(v1)
		\Edge[Direct, bend=-10](v1)(v3)
		\Edge[Direct, bend=10](v2)(v3)
		\Edge[Direct, bend=-10](v3)(v4)
		\Edge[Direct, bend=-10](v4)(v2)
	\end{tikzpicture}
	\caption{An example of the directed graph $G=(V, E)$. We set $R=4$.}
	\label{figure1:appendix:notations}
\end{figure}
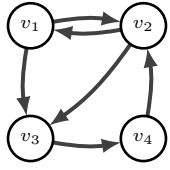

\begin{table}[tb]
	\caption{Examples of mixed walks \& cycles}
	\begin{tabular}{llcc}\hline
		mixed walk                                  & type                        & $\left| W \right|$ & $r(W)$
		\\\hline
		$W_1 = (v_2, v_1)$                          & NBT                         & $1$                & $0$
		\\\hline
		$W_2 = (v_2, v_3)$                          & NBT                         & $1$                & $1$
		\\\hline
		$W_3 = (v_4, v_3)$                          & NBT                         & $1$                & $3$
		\\\hline
		$W_4 = (v_1, v_2, v_3, v_4)$                & NBT                         & $3$                & $2$
		\\\hline
		$W_5 = (v_1, v_2, v_3, v_2)$                & backtracking                & $3$                & $0$
		\\\hline
		$C_1 = (v_1, v_2, v_3, v_1)$                & NBT, primitive              & $3$                & $0$
		\\\hline
		$C_2 = (v_1, v_2, v_1, v_2, v_1)$           & backtracking, not primitive & $4$                & $0$
		\\\hline
		$C_3 = (v_1, v_2, v_3, v_4, v_2, v_1)$      & NBT, tail, primitive        & $5$                & $3$
		\\\hline
		$C_4 = (v_2, v_4, v_3, v_2, v_4, v_3, v_2)$ & NBT, not primitive          & $6$                & $2$
		\\\hline
	\end{tabular}
	\label{table1:appendix:notations}
\end{table}

In this appendix, we provide a simple directed graph to demonstrate each concept. A concrete graph we use here is shown in Figure~\ref{figure1:appendix:notations}.
This graph $G=(V,E)$ is defined as $V = \left\{ v_1, v_2, v_3, v_4 \right\}$, $E = \left\{ e_{1,2}, e_{2,1}, e_{1,3}, e_{2,3}, e_{3,4}, e_{4,2} \right\}$
where $e_{i,j} \coloneqq \overrightarrow{v_i v_j}$. Then, we can index edges as
\begin{align*}
	\vec{E}
	 & = \left\{
	e_1,\; e_2,\; e_3,\; e_4,\; e_5,\;
	e_6,\; e_7,\; e_8,\; e_9,\; e_{10}
	\right\}
	,            \\
	 & = \left\{
	e_{1,2},\; e_{1,3},\; e_{2,3},\; e_{3,4},\; e_{4,2},\;
	e_{2,1},\; e_{3,1},\; e_{3,2},\; e_{4,3},\; e_{2,4}
	\right\}
	.
\end{align*}
Moreover, the relations
\begin{align*}
	\left\{
	v_1 \leftrightarrow_{G} v_2,\; v_2 \leftrightarrow_{G} v_1,\;
	v_1 \rightarrow_{G} v_3,\; v_3 \leftarrow_{G} v_1,\;
	v_2 \rightarrow_{G} v_3,\; v_3 \leftarrow_{G} v_2,\;
	v_2 \leftarrow_{G} v_4,\; v_4 \rightarrow_{G} v_2,\;
	v_3 \rightarrow_{G} v_4,\; v_4 \leftarrow_{G} v_3
	\right\}
\end{align*}
are satisfied.
When we set $R=4$, then $\alpha = e^{\frac{2\pi \sqrt{-1}}{4}} = \sqrt{-1}$.
Matrices $A_{\alpha}$ and $B_{\alpha}$ are shown as follows, respectively.
\begin{align*}
	A_{\alpha} & = \begin{pmatrix}
		               0            & 1            & \alpha       & 0
		               \\
		               1            & 0            & \alpha       & \bar{\alpha}
		               \\
		               \bar{\alpha} & \bar{\alpha} & 0            & \alpha
		               \\
		               0            & \alpha       & \bar{\alpha} & 0
	               \end{pmatrix}
	,          & \quad
	B_{\alpha} & = \begin{pmatrix}
		               0 & 0      & \alpha & 0      & 0      & 0 & 0            & 0            & 0            & \bar{\alpha}
		               \\
		               0 & 0      & 0      & \alpha & 0      & 0 & 0            & \bar{\alpha} & 0            & 0
		               \\
		               0 & 0      & 0      & \alpha & 0      & 0 & \bar{\alpha} & 0            & 0            & 0
		               \\
		               0 & 0      & 0      & 0      & \alpha & 0 & 0            & 0            & 0            & 0
		               \\
		               0 & 0      & \alpha & 0      & 0      & 1 & 0            & 0            & 0            & 0
		               \\
		               0 & \alpha & 0      & 0      & 0      & 0 & 0            & 0            & 0            & 0
		               \\
		               1 & 0      & 0      & 0      & 0      & 0 & 0            & 0            & 0            & 0
		               \\
		               0 & 0      & 0      & 0      & 0      & 1 & 0            & 0            & 0            & \bar{\alpha}
		               \\
		               0 & 0      & 0      & 0      & 0      & 0 & \bar{\alpha} & \bar{\alpha} & 0            & 0
		               \\
		               0 & 0      & 0      & 0      & 0      & 0 & 0            & 0            & \bar{\alpha} & 0
	               \end{pmatrix}
	.
\end{align*}

When we consider the cycle $C_1$ in Table~\ref{table1:appendix:notations},
the $2$ times repetition of the cycle is $( C_1 )^2 = (v_1, v_2, v_3, v_1, v_2, v_3, v_1)$.
In addition, $C_1$ is equivalent with a cycle $(v_2, v_3, v_1, v_2)$.

The NBT walks $W_1,\; W_2,\; W_3$ and $W_4$ are counted in $( P_{(1,0)} )_{v_2 v_1}$, $( P_{(1,1)} )_{v_2 v_3}$, $( P_{(1,3)} )_{v_4 v_3}$ and $( P_{(3,2)} )_{v_1 v_4}$, respectively.
The NBT cycles $C_1, C_3$ and $C_4$ can be counted within multiple elements of $Q_{(k,r)}$.
For instance, $C_4$ is included in elements $( Q_{(6,2)} )_{e_8 e_8}$ and $( Q_{(6,2)} )_{e_1 e_8}$, among others.
However, when it comes to the diagonal components of $Q_{(k,r)}$, a cycle is not redundantly included in multiple diagonal elements.
This means that $C_4$ is only included in $( Q_{(6,2)} )_{e_8 e_8}$ among the diagonal elements of $Q_{(6,2)}$.

\section{Derivation of Eq.~\eqref{eq:delta_equation}}\label{appendix:derivation_delta}

Belief propagation~(BP) is a message-passing algorithm for calculating a probability distribution on graphical models.
In this paper, we investigate the connection between the CNBT matrix and cluster estimation via BP.
This appendix provides a detailed derivation of Eq.~\eqref{eq:delta_equation}.

For the standard BP notation, we define $\psi_{uv}(a,b) \coloneqq P_{a b}^{A_{uv}} (1-P_{a b})^{1-A_{uv}}$.
Using this notation, the update equation of a message $\nu_{u\to v}$ and the marginal distribution $\nu_{u}$ can be expressed as
\begin{align*}
	\nu_{u\to v}(t_u) & \propto \psi_u(t_u) \prod_{w\in V\setminus \{u, v\}} \sum_{t_w} \psi_{uw}(t_u, t_w) \nu_{w\to u}(t_w)
	,                                                                                                                         \\
	\nu_u(t_u)        & \propto \psi_u(t_u) \prod_{w\in V\setminus \{ u \}} \sum_{t_w} \psi_{uw}(t_u, t_w) \nu_{w\to u}(t_w)
	.
\end{align*}
With the normalization condition $\sum_{b} \nu_{w\to u}(b) = 1$ and $P_{ab} = \frac{c_{ab}}{N}$, the update equation can be rewritten by
\begin{align*}
	\nu_{u\to v}(t_u) \propto \psi_u(t_u) \left\{ \prod_{w\in N_u \setminus \{ v \}} \sum_{t_w} c_{t_u t_w} \nu_{w\to u}(t_w) \right\} \left\{ \prod_{\substack{w\in V\setminus N_u \\ w\neq u,v}} \left( 1-\frac{1}{N} \sum_{t_w} c_{t_u t_w} \nu_{w\to u}(t_w) \right) \right\}
	.
\end{align*}
Considering whether there exists an edge between vertex $u$ and vertex $v$, we apply different approximations accordingly.

When there does not exist an edge from vertex $u$ to vertex $v$,
\begin{align*}
	\nu_{u\to v}(t_u) \propto \frac{\nu_u(t_u)}{1 - \frac{1}{N} \sum_{t_v} c_{t_u t_v} \nu_{v\to u}(t_v)}
	\xrightarrow{N\to\infty} \nu_u(t_u)
	.
\end{align*}
Let us consider the case where an edge exists. Since the graph is sparse, the degree of each vertex $u$ is $O(1)$, which implies that $\frac{1}{N} \sum_{w\in N_u \cup \{ u \}} \sum_{t_w} c_{t_u t_w} \nu_w(t_w) \xrightarrow{N\to\infty} 0$ holds.
Furthermore, by analogy with $\lim_{n\to\infty} (1+\frac{x}{n})^n = e^x$, the following approximation holds.
\begin{align}
	\nu_{u\to v}(t_u) & \propto \psi_u(t_u) e^{-h(t_u)} \prod_{w\in N_u \setminus \{ v \}} \sum_{t_w} c_{t_u t_w} \nu_{w\to u}(t_w)
	,                                                                                                                               \label{eq:adjacent_message} \\
	\text{where}      & \quad h(a) \coloneqq \frac{1}{N} \sum_{w\in V} \sum_{b} c_{ab} \nu_{w}(b)
	.
\end{align}
The term $h(a)$ is said to be an auxiliary external field and summarizes the influence from non-adjacent vertices.

By using $\nu_{u\to v}(a) = n_a + \delta_{u\to v}(a)$ and $\log(C + x) \approx \log C + \frac{x}{C}$, the term $\frac{\nu_{u\to v}(a)}{\nu_{u\to v}(b)}, a\neq b$ can derive
\begin{align*}
	\frac{\delta_{u\to v}(a)}{n_a} - \frac{\delta_{u\to v}(b)}{n_b} =
	- (h(a) - h(b)) + \sum_{w\in N_u \setminus \{ v \}} \sum_{d} \frac{1}{c}(c_{a d} - c_{b d}) \delta_{w\to u}(d)
	.
\end{align*}
Since this equation holds for any $b\neq a$, summing over $b\neq a$ yields Eq.~\eqref{eq:delta_equation}.

\section{Proof of Lemma~\ref{lemma:spectral_matrix_T}}

Since the matrices $T_1, T_2$ are circulant, let $p_0 = \frac{1}{2},\; p_1 = e,\; p_2 = \dots = p_{K-2} = \frac{1}{2},\; p_{K-1} = 1-e$ and $q_0 = g,\; q_1 = f,\; q_2 = \dots = q_{K-1} = g$, then $(T_1)_{ab} = p_{(b-a) \text{ mod } K}$ and $(T_2)_{ab} = q_{(b-a) \text{ mod } K}$ holds.
Furthermore, we denote the eigenvalues of $T_1, T_2$ by $\lambda_k, \mu_k, k=0,\dots, K-1$ respectively. A property of circulant matrices~\citep{Gray2005-ne} leads
\begin{align*}
	\lambda_k & = \sum_{l=0}^{K-1} p_l \exp\left( \frac{2\pi \sqrt{-1}}{K} kl \right)
	,         & \quad
	\mu_k     & = \sum_{l=0}^{K-1} q_l \exp\left( \frac{2\pi \sqrt{-1}}{K} kl \right)
	.
\end{align*}

When $k=0$, $\lambda_0 = \frac{K}{2}$ and $\mu_0 = f + (K-1)g$ hold. For the case where $k\neq 0$, $\lambda_k$ and $\mu_k$ can be calculated as follows:
\begin{align*}
	\lambda_k & = \sqrt{-1} (2e - 1) \sin\left( \frac{2\pi}{K} k \right)
	,         & \quad
	\mu_k     & = (f-g) \exp\left( \frac{2\pi \sqrt{-1}}{K} k \right)
	.
\end{align*}
\qed

\bibliography{main}
\end{document}